\newcommand{\Nset}{\mathbb{N}}
\newcommand{\uml}{\mathrm{MLU}}
\newcommand{\mlu}{\mathrm{MLU}}
\newcommand{\gmlu}{\mathrm{GMLU}}
\newcommand{\tp}{\mathrm{tp}}
\newcommand{\fs}{\mathrm{FS}}
\newcommand{\fsc}{\mathrm{FSC}}
\newcommand{\propmove}{$p$-move}
\newcommand{\lormove}{$\lor$-move}
\newcommand{\landmove}{$\land$-move}
\newcommand{\di}{\blacklozenge}
\newcommand{\bo}{\blacksquare}
\newcommand{\bomove}{$\bo$-move}
\newcommand{\bbomove}{$\bo^{< d}$-move}
\newcommand{\dimove}{$\di$-move}
\newcommand{\ddimove}{$\di^{\geq d}$-move}
\renewcommand{\AA}{\mathcal{A}}
\newcommand{\BB}{\mathcal{B}}
\newcommand{\MM}{\mathfrak{M}}
\newcommand{\G}{\mathcal{G}}
\newcommand{\mods}{\mathrm{Md}}
\newcommand{\size}{size}
\newcommand{\poshard}{\#h^+}
\newcommand{\asymptotic}{\sim}
\newcommand{\equivalence}{\equiv}
\renewcommand{\phi}{\varphi}
\newbox\gnBoxA
\newdimen\gnCornerHgt
\newdimen\gnArgHgt
\def\Godelnum #1{%
\setbox\gnBoxA=\hbox{$#1$}%
\gnArgHgt=\ht\gnBoxA%
\ifnum     \gnArgHgt<\gnCornerHgt \gnArgHgt=0pt%
\else \advance \gnArgHgt by -\gnCornerHgt%
\fi \raise\gnArgHgt\hbox{$\ulcorner$} \box\gnBoxA %
\raise\gnArgHgt\hbox{$\urcorner$}}
\author[1]{Reijo Jaakkola}
\author[1,2]{Antti Kuusisto}
\author[1]{Miikka Vilander}
\affil{Tampere University, Finland}
\affil[2]{University of Helsinki, Finland}
\date{}
\begin{document}

\setlength\abovedisplayskip{3pt}
\setlength\belowdisplayskip{3pt}

\title{Relating description complexity to entropy}

\theoremstyle{plain}
\newtheorem{theorem}{Theorem}[section]
\newtheorem{lemma}[theorem]{Lemma}
\newtheorem{corollary}[theorem]{Corollary}
\newtheorem{proposition}[theorem]{Proposition}
\newtheorem{fact}[theorem]{Fact}
\theoremstyle{definition}
\newtheorem{definition}[theorem]{Definition}
\newtheorem{remark}[theorem]{Remark}
\newtheorem{example}[theorem]{Example}

\maketitle

\begin{abstract}
\noindent
We demonstrate some novel
links between entropy and description complexity, a notion referring to the minimal formula length for specifying given properties. Let MLU be the logic obtained by extending propositional logic with the universal modality, and let GMLU be the corresponding extension with the ability to count. In the finite, MLU is expressively complete for specifying sets of variable assignments, while GMLU is expressively complete for multisets.
We show that for MLU, the model classes with maximal Boltzmann entropy are the ones with maximal description complexity. Concerning GMLU, we show that expected Boltzmann entropy is asymptotically equivalent to expected description complexity multiplied by the number of proposition symbols considered. To contrast these results, we show that this link breaks when we move to considering first-order logic FO over vocabularies with higher-arity relations. To establish the aforementioned result, we show that almost all finite models require relatively large FO-formulas to define them. Our results relate to links between Kolmogorov complexity and entropy, demonstrating a way to conceive such results in the logic-based scenario where relational structures are classified by formulas of different sizes.
\end{abstract}

\section{Introduction}

In this article we investigate links
between description complexity and 
entropy. 
By description complexity of a model, we mean the minimal 
length of a formula that specifies the model up to a
maximal possible extent.
With a strong enough logic, this amounts to
investigating the length of formulas
specifying models up to isomorphism, but this is by no means the 
only interesting scenario. 
By the description complexity of a class of models, we mean the 
minimal length of a formula defining that class. 
In this paper we are 
particularly interested in the description complexity of 
completely specified model classes, i.e., equivalence classes of logics.
The main objective of the 
paper is to point out links between description complexity and entropy.
By entropy, we refer essentially to Shannon's
entropy and the earlier
notion of Boltzmann entropy from statistical mechanics.

We first consider models with unary relational vocabularies. We
study two related logics, MLU and GMLU. The logic MLU is 
the extension of propositional logic with the universal modality $\di$,
also known as global modality. The truth definition states
that $\MM,w\vDash \di\varphi$ if $\MM,u\vDash \varphi$
for some $u$ in the domain of $\MM$. Thus, in the finite, this logic is 
tuned to specify precisely which 
variable assignments are
present in the model considered. 
The system GMLU is the extension of MLU with the
ability to count: we have $\MM,w\vDash \di^{\geq d}\varphi$ if $\MM,u\vDash \varphi$
for at least $d$ points $u$ in the domain of $\MM$.
We note that when limiting to
models with a finite unary vocabulary and a fixed finite bound on domain size, GMLU is
expressively complete, being able to define all classes of
models closed under isomorphism. While MLU can fully specify which \emph{set} of
assignments is present in a model, GMLU can lift this specification to
the level of \emph{multisets}.

Let $\tau$ be a finite unary relational vocabulary,
and let $\mathrm{Mod}_n(\tau)$ denote the
class of $\tau$-models over the fixed domain $W = \{1,\dots , n\}$. 
Let $\equiv_{\mathrm{MLU}}$ and $\equiv_{\mathrm{GMLU}}$ denote the logical equivalence
relations of MLU and GMLU over $\mathrm{Mod}_n(\tau)$. We first
prove that among the classes of $\equiv_{\mathrm{MLU}}$, the class with 
the largest description complexity is the class with the 
largest Boltzmann entropy. This means that the models with the 
largest description complexity belong to the class that has 
the largest Boltzmann entropy. 
We then move on to investigating GMLU. 
Let $\langle H_B \rangle$ denote the expected Boltzmann entropy over 
the equivalence classes of $\equiv_{\mathrm{GMLU}}$, with the probability of an
individual class being its size divided by the size of $\mathrm{Mod}_n(\tau)$.
Let $\langle C \rangle$ denote the expected description 
complexity of a model chosen randomly from $\mathrm{Mod}_n(\tau)$,
and   
let $|\tau|$ denote the size of the vocabulary $\tau$.  
We will prove that 
\begin{align}\label{equationoneone}
\langle H_B \rangle \asymptotic |\tau|\langle C \rangle
\end{align}
that is, $\langle H_B \rangle$ is asymptotically equivalent to $|\tau|\langle C\rangle$. 
This gives an intimate
relationship between $\langle C\rangle$ and Boltzmann entropy.
To obtain a link to Shannon entropy, we simply
note that the
Shannon entropy of the 
distribution of models based on $\equiv_{\mathrm{GMLU}}$ is
equal to $\langle H_B \rangle - \log(|\mathrm{Mod}_n(\tau)|)$.

We then move on to investigating general 
(finite) relational vocabularies.
Our main result there is that the expected description complexity of
classes of $\mathrm{FO}$ grows
asymptotically faster with domain size than the corresponding 
expected Boltzmann entropy. To establish this result, we show that almost all models require relatively large $\mathrm{FO}$-formulas to define them.

There exist well known relationships between
Kolmogorov complexity and entropy. Notably, 
for any computable distribution, the expected Kolmogorov complexity can be linked, 
within a constant, to Shannon entropy. 
See for example \cite{grunwald}, \cite{leung}, \cite{vitanyi}, \cite{teixeira} for 
discussions of the issue. The article \cite{teixeira} discusses some
generalizations and shows, e.g., that the relationship fails in the general 
case for R\'{e}nyi and Tsallis entropies. 
Links between description lengths and entropy are fundamentally interesting, 
linking syntactic issues to semantic randomness. Most notable results in 
the field concern variants of Kolmogorov complexity. 
The aim of the current article is to provide one way of
demonstrating how these
results extend beyond the realm of binary strings and
descriptions via programs.
The link given in Equation (1) elucidates 
nicely the relationship between the syntax of GMLU and models with unary 
vocabularies. The result on FO provides contrast to this and
warns against overselling the analogy between description complexities
and entropy. However, we conjecture that even for FO, a monotone
Galois connection can be demonstrated between
description complexities and related Boltzmann entropies, but this is
left for future work for lack of space.

Concerning other related work in addition to 
the links to Kolmogorov complexity, we turn attention to the
proof techniques used in the paper. One of the main tools 
we use is the framework of logic-related games. We note that standard Ehrenfeucht-Fra\"{i}ss\'{e} games,
and their variants such as bisimulation games, do not suffice for
the purposes of this article. Thus we utilize \emph{formula size
games} for MLU and GMLU instead. Generally, the first formula size game was defined for propositional logic by Razborov in \cite{Razborov90}. A better known version is the game of Adler and Immerman for $\mathrm{CTL}$ in \cite{AdlerI03}. The game for MLU resembles the similar
game developed in \cite{HellaV19} which was there also used to demonstrate a nonelementary succinctness gap between modal logic 
and FO. For GMLU, we develop a suitable game from scratch.
The hard part is using the games in a suitable way. In addition to games, we also use various techniques for estimating Boltzmann entropy and description complexity,
e.g., Stirling's approximation, the weak law of large numbers and counting arguments.

\section{Preliminaries}

Let $f,g:\mathbb{N} \to \mathbb{R}_{\geq 0}$. We use $f = \mathcal{O}(g)$ to denote that $f \leq C g(n)$, for some constant $C > 0$ and large enough $n$. If we want to emphasize that the implied constant $C$ depends on some parameter $p$ (which is independent of $n$), we will write $f = \mathcal{O}_p(g)$. We use $f = \Omega(g)$ to denote that $f(n) \geq Cg(n)$, for some constant $C > 0$ and large enough $n$. Finally, we use $f = \Theta(g)$ to denote that $f = \mathcal{O}(g)$ and $f = \Omega(g)$. We say that $f$ is \textbf{asymptotically} $g$, if
$\lim_{n \to \infty} f/g = 1$
and we denote this by $f \asymptotic g$. By $\log$ we mean logarithm to base two.

The following variants of classical results will be useful for our purposes.

\begin{proposition}[Stirling's approximation \cite{feller-vol-1}]\label{prop:stirling_approximation}
    $\log(n!) = n\log(n) - n\log(e) + \Theta(\log(n))$
\end{proposition}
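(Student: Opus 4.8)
The plan is to reduce everything to a comparison between the sum $\log(n!)=\sum_{k=1}^{n}\log k$ (using $\log 1=0$) and the integral $\int_{1}^{n}\log x\,dx$. Since $x\log x-x\log e$ is an antiderivative of $\log x$, this integral equals $n\log n-n\log e+\log e$, so the main term $n\log n-n\log e$ comes for free and the whole problem becomes: show that the error $E(n):=\log(n!)-n\log n+n\log e$ satisfies $E(n)=\Theta(\log n)$.

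For the upper bound I would use that $\log$ is increasing, hence $\log k\le\int_{k}^{k+1}\log x\,dx$ for every $k\ge 1$. Summing over $k=1,\dots,n$ gives $\log(n!)\le\int_{1}^{n+1}\log x\,dx=(n+1)\log(n+1)-(n+1)\log e+\log e$. Writing $\log(n+1)=\log n+\log(1+1/n)$ and bounding $(n+1)\log(1+1/n)=\mathcal{O}(1)$ via $\log(1+x)\le x\log e$, this simplifies to $n\log n-n\log e+\log n+\mathcal{O}(1)$, so $E(n)\le\log n+\mathcal{O}(1)$.

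For the lower bound I would instead exploit that $\log$ is \emph{concave}: by Jensen's inequality applied to a point uniformly distributed on $[k-\tfrac12,k+\tfrac12]$ (whose mean is $k$), we get $\log k\ge\int_{k-1/2}^{k+1/2}\log x\,dx$ for each $k\ge 1$. Summing over $k=1,\dots,n$ yields $\log(n!)\ge\int_{1/2}^{n+1/2}\log x\,dx$, and evaluating the antiderivative together with $\log(n+\tfrac12)=\log n+\mathcal{O}(1/n)$ gives $n\log n-n\log e+\tfrac12\log n+\mathcal{O}(1)$, i.e. $E(n)\ge\tfrac12\log n-\mathcal{O}(1)$. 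Combining the two estimates gives $\tfrac12\log n-\mathcal{O}(1)\le E(n)\le\log n+\mathcal{O}(1)$, hence $E(n)=\Theta(\log n)$, as required.

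The only delicate point — and the reason a purely naive argument is insufficient — is the lower bound: the crude estimate $\log k\ge\int_{k-1}^{k}\log x\,dx$ only yields $E(n)=\Omega(1)$, which does not match the $\mathcal{O}(\log n)$ upper bound and so does not establish $\Theta(\log n)$. Centering the comparison interval at $k$ and using concavity is exactly what extracts the extra $\tfrac12\log n$; everything else is routine manipulation of logarithms (or one may simply invoke the standard form of Stirling's formula from \cite{feller-vol-1}).
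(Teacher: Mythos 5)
Your argument is correct. Note that the paper offers no proof of this proposition at all---it is stated as a known fact and attributed to \cite{feller-vol-1}---so the comparison here is with the standard literature rather than with an in-paper argument. What you give is a self-contained integral-comparison proof, and you correctly isolate the one non-routine point: the monotonicity bound $\log k \le \int_k^{k+1}\log x\,dx$ yields $E(n)\le \log n+\mathcal{O}(1)$, but the naive companion bound $\log k\ge\int_{k-1}^{k}\log x\,dx$ only gives $E(n)\ge 0$, which is not enough for a two-sided $\Theta(\log n)$ estimate; the midpoint/Jensen inequality $\log k\ge\int_{k-1/2}^{k+1/2}\log x\,dx$ is exactly what recovers the lower-order term $\tfrac12\log n$ and hence the matching $\Omega(\log n)$ lower bound. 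Your sandwich $\tfrac12\log n-\mathcal{O}(1)\le E(n)\le\log n+\mathcal{O}(1)$ is consistent with the true asymptotics $E(n)=\tfrac12\log n+\tfrac12\log(2\pi)+o(1)$. Two small points you rely on implicitly do go through: Jensen applies at $k=1$ even though $\log$ is negative on part of $[\tfrac12,\tfrac32]$ (the integral there is indeed $\le 0=\log 1$), and $E(n)\ge 0$, so the paper's convention that $\Theta$ compares nonnegative functions is respected. Your route is more elementary than invoking the full Stirling formula and is entirely adequate for the weak form stated here.
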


\begin{proposition}[Weak law of large numbers \cite{loeve1955probability}]\label{proposition:law_of_large_numbers}
    Let $(X_n)_{n \in \mathbb{N}}$ be a sequence of Bernoulli random variables with success probability $p := \Pr[X_n = 1]$. Then for every $\delta > 0$ we have that
    
    \smallskip
    
    $\lim_{n\to\infty} \Pr\bigg[\bigg|p - \frac{1}{n} \sum_{i=1}^n X_n\bigg| < \delta\bigg] = 1.$
    
    \smallskip

\end{proposition}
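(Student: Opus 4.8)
The plan is to prove this by the second-moment method, i.e., via Chebyshev's inequality; since we only need convergence in probability rather than an exponential concentration bound, this standard route suffices. Write $S_n = \sum_{i=1}^n X_i$ and $\bar X_n = S_n / n$. Assuming, as is implicit in the statement, that the $X_i$ are (pairwise) independent, the first step is to record the first two moments of $\bar X_n$: linearity of expectation gives $\mathbb{E}[\bar X_n] = p$, and pairwise independence together with $\mathrm{Var}(X_i) = p(1-p)$ gives $\mathrm{Var}(\bar X_n) = n^{-2}\sum_{i=1}^n \mathrm{Var}(X_i) = p(1-p)/n$.

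The second step is to apply Chebyshev's inequality to $\bar X_n$: for every fixed $\delta > 0$,
\[
\Pr\!\left[\,\left|\bar X_n - p\right| \geq \delta\,\right] \;\leq\; \frac{\mathrm{Var}(\bar X_n)}{\delta^2} \;=\; \frac{p(1-p)}{n\delta^2},
\]
and taking complements yields $\Pr[\,|\bar X_n - p| < \delta\,] \geq 1 - p(1-p)/(n\delta^2)$.

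Finally, since $\delta$ is fixed and $p(1-p) \leq 1/4$ is a constant, the lower bound tends to $1$ as $n \to \infty$, while probabilities are at most $1$; the squeeze theorem then gives $\lim_{n\to\infty} \Pr[\,|\bar X_n - p| < \delta\,] = 1$, which is exactly the claim. I do not expect a genuine obstacle here: the only point that needs care is that (pairwise) independence of the $X_i$ is actually used in the variance bound — without some independence assumption the statement can fail — so this hypothesis, left tacit in the proposition, must be invoked. An alternative that avoids quoting Chebyshev would be to bound the fourth moment $\mathbb{E}[(\bar X_n - p)^4]$ directly and use Markov's inequality, but for a sketch the Chebyshev argument is the cleanest.
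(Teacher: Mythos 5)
Your argument is correct and is the standard textbook proof of the weak law of large numbers for Bernoulli variables: compute $\mathbb{E}[\bar X_n]=p$ and $\mathrm{Var}(\bar X_n)=p(1-p)/n$, apply Chebyshev, and let $n\to\infty$. The paper itself offers no proof of this proposition --- it is quoted as a classical result with a citation to Lo\`eve --- so there is nothing to compare against beyond noting that your route is exactly the one any standard reference takes. Your remark that (pairwise) independence of the $X_i$ is tacitly assumed in the statement, and is genuinely needed for the variance computation, is a fair and worthwhile observation; the paper indeed uses the proposition in a setting (indicator variables for type assignments of distinct elements) where independence holds.
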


We next define the logics studied in this work.
Let $\tau = \{p_1, \dots, p_k\}$ be a set of proposition symbols. The syntax of 
\textbf{graded universal modal logic} $\gmlu[\tau]$ is generated as follows.

\smallskip

$\varphi := \di^{\geq d} \psi \mid \bo^{< d} \psi \mid \varphi \lor \varphi \mid \varphi \land \varphi \mid \di^{\geq d} \varphi \mid \bo^{< d} \varphi$

$\psi := p \mid \neg p \mid \psi \lor \psi \mid \psi \land \psi$

\smallskip

\noindent
Here $p \in \tau$ and $d \in \Nset$. Notice that by design, the formulas of $\gmlu[\tau]$ only contain proposition symbols that occur in the scope of a global modal operator $\di^{\geq d}$ or $\bo^{< d}$. Additionally, all formulas are in negation normal form. (In the sequel, the notation $\neg \varphi$ will always mean the negation normal form formula, where the negation has been pushed to the level of literals.)
Now, let $\MM$ be a Kripke model with universe $W$.  
The semantics of the global graded modalities are defined as follows:
$(\MM, w) \vDash \di^{\geq d}\varphi \Leftrightarrow$ there are at least $d$ points $v \in W$ such that $(\MM, v) \vDash \varphi.$
Additionally, $(\MM, w) \vDash \bo^{< d} \varphi \Leftrightarrow (\MM, w) \vDash \neg \di^{\geq d} \neg \varphi$. Intuitively this means that all points in $\MM$ satisfy $\varphi$, except for less than $d$ exceptions. The rest of the semantics is defined as usual in propositional logic. Note
that $\di^{\geq d}$ and $\bo^{< d}$ are
dual to each other. (We note that in this article, modal logics will always have a strictly unary vocabulary, so Kripke models will not have an accessibility relation as part of the relational structure involved.)

Given a Kripke model $\MM$ over $\tau$ and $\varphi \in \gmlu[\tau]$, we define the point-free truth relation such that
$\MM \vDash \varphi \Leftrightarrow \text{for every } w\in W, \text{ we have } \MM,w \vDash \varphi$. Since no propositional symbol occurs outside the scope of a global modality, $\MM \vDash \varphi$ iff there is some $w\in W$ for which $\MM,w \vDash \varphi$.
Hence the truth of any formula of $\gmlu$ is independent of the evaluation point $w$. The property that truth is always independent of the evaluation point is the reason we defined $\gmlu$ so that proposition symbols must occur in the scope of modalities. 
The fragment of $\gmlu[\tau]$ where $d=1$ for all modalities is called \textbf{universal modal logic} $\mlu[\tau]$. This logic has only the modalities $\di^{\geq 1}$ and $\bo^{< 1}$, and we denote these with $\di$ and $\bo$ for simplicity.

A $1$-\textbf{type} $\pi$ over $\tau$ is a maximally consistent set of literals (propositional symbols and their negations). This means that $\pi$ has exactly one of $p$ or $\neg p$ for each $p\in\tau$.
The set of all $1$-types over $\tau$ is denoted by $\boldsymbol{\alpha}_\tau$. Given a Kripke model $\MM$ over $\tau$ and $w\in W$, we let $\tp_\MM[w]$ denote the unique $1$-type that $w$ \textbf{realizes}.

The \textbf{size} of a formula $\varphi \in \gmlu[\tau]$, denoted $\size(\varphi)$, is defined as follows:
\begin{itemize}
    \item $\size(\alpha) = 1$ for a literal $\alpha$,
    \item $\size(\varphi \lor \psi) = \size(\varphi \land \psi) = \size(\varphi) + \size(\psi) + 1$,
    \item $\size(\di^{\geq d} \varphi) = \size(\bo^{< d} \varphi) = \size(\varphi) + d$.
\end{itemize}
We emphasize that according to our definition \emph{all literals have the same size}. The motivation for this is to consider negative (i.e., negated)
information and positive (i.e., non-negated)
information as equal in
relation to formula size.
This also explains the convention of defining $\gmlu$ such that formulas are in negation normal form. 

We will also consider standard first-order logic $\mathrm{FO}$. Let $\tau = \{R_1,\dots,R_k\}$ be a set of relation symbols. The syntax of $\mathrm{FO}[\tau]$ is generated by the following grammar:

\smallskip

$\varphi := x = y \mid \neg x = y \mid R(\overline{x}) \mid \neg R(\overline{x}) \mid \varphi \lor \varphi \mid \varphi \land \varphi \mid \exists x \varphi \mid \forall x \varphi$,

\smallskip

\noindent
where $\overline{x}$ is a tuple of variables.
We use the standard semantics of $\mathrm{FO}[\tau]$. 
The \textbf{size} of a formula $\varphi \in \mathrm{FO}[\tau]$, denoted $\size(\varphi)$, is defined as follows:
\begin{itemize}
    \item $\size(\alpha) = 1$ for a literal $\alpha$,
    \item $\size(\varphi \lor \psi) = \size(\varphi \land \psi) = \size(\varphi) + \size(\psi) + 1$,
    \item $\size(\exists x \varphi) = \size(\forall x \varphi) = \size(\varphi) + 1$
\end{itemize}
Again we emphasize that according to our definition, all literals have the same size.


Let $\mathcal{L} = (L,\vDash)$ be a logic and $\mathcal{M}$ a \emph{finite class of models}. The class $\mathcal{M}$ is here considered fixed and known from the context. We say that a formula $\varphi \in L$ of $\mathcal{L}$ \textbf{defines} a set $M \subseteq \mathcal{M}$ if for all $\mathfrak{M} \in \mathcal{M}$, we have $\mathfrak{M} \vDash \varphi$ iff $\mathfrak{M} \in M$. Such a set $M$ is called $\mathcal{L}$-\textbf{definable} (with respect to $\mathcal{M}$). Given an $\mathcal{L}$-definable set $M$, its $\mathcal{L}$-\textbf{description complexity} $C_\mathcal{L}(M)$ is the size of a minimum size formula $\varphi \in \mathcal{L}$ which defines $M$. Now, if $\mathcal{L}$ is closed under negation (as all the logics in this paper are), then the relation ``$\mathfrak{M}$ and $\mathfrak{N}$ satisfy the same $\mathcal{L}$-formulas''
induces a partition of $\mathcal{M}$ denoted by $\equivalence_\mathcal{L}$. The $\mathcal{L}$-description complexity \emph{of a model} $\MM$ with respect to $\equivalence_{\mathcal{L}}$ is $C_{\equivalence_{\mathcal{L}}}(\MM) := C_{\mathcal{L}}(M)$, where $M$ is the equivalence class of $\MM$. For brevity, we formulate the results below only for description complexities of classes rather than models. 

Let $\mathcal{M}$ be a finite class of models and let $\equivalence$ be an \emph{arbitrary} equivalence relation over $\mathcal{M}$. Given an equivalence class $M \subseteq \mathcal{M}$, we define its \textbf{Boltzmann entropy} as $H_B(M) := \log(|M|)$.
%
%
%
%
%
%
This terminology is borrowed from statistical mechanics, where the Boltzmann entropy of a macrostate is the quantity $k_B \ln(\Omega)$. Here $k_B$ is the Boltzmann constant, $\ln$ the natural logarithm and $\Omega$ the number of microstates associated with the macrostate. Note that in our definition, we use the binary logarithm. 
As a measure of randomness, it is natural to 
define the Boltzmann entropy \emph{of a model} $\MM$ as $H_B^{\equivalence}(\MM) := H_B(M)$, where $M$ is the equivalence class of $\MM$. This reflects the \emph{informal intuition} that often the randomness of an object $x$ is in fact more related to the randomness of a similarity class of objects that $x$ belongs to rather than to $x$ itself. Consider, for example, the equivalence classes that a
sufficiently weak logic defines over the
universe of binary strings of a fixed finite length.
In a suitable logic, the string with only bits 1 will be in its own singleton equivalence class. So will the string with only bits 0. Also the two strings with the strictly alternating pattern …010101… are likely to be in their own singleton classes. But more ``random’’ strings end up in larger classes, and each such class is a similarity class for its member strings. It is natural to consider the strings in the same similarity class as equally random. Pushing this perspective, they could perhaps even be considered---in some informal sense---the ``same'' random string with the same degree of randomness. This degree can be measured by the size of the class, or by the binary logarithm of the size of the class. Thus it is natural to define the Boltzmann entropy of a single string as the Boltzmann entropy of the similarity class it belongs to.

Let $\{M_i \mid i \in I\}$ enumerate the equivalence classes of $\equivalence$. As they form a partition of $\mathcal{M}$, we have the following natural probability distribution over the equivalence classes:
$p_\equivalence(M_i) := |M_i|/|\mathcal{M}|$. Given a random variable $X:\{M_i \mid i \in I\} \to \mathbb{R}_{\geq 0}$, we use $\langle X \rangle$ to denote its expected value with respect to $p_\equivalence$. 
Now, suppose we are in a context where we have fixed a 
finite universe $\mathcal{M}$ of models. Let $\equiv_{\gmlu}\, \subseteq\, \mathcal{M}\times \mathcal{M}$ be the corresponding equivalence relation of $\gmlu$.
Suppose $\{M_i \mid i \in I\}$
enumerates the equivalence classes of $\equiv_{\gmlu}$. Recall that $C_{\gmlu}(M_i)$
denotes the $\gmlu$ description complexity of the class $M_i$.
Let $p_{\equivalence_{\gmlu}}(M_i)$ be the corresponding 
probability $|M_i|/|\mathcal{M}|$.
In this paper, we denote by $\langle C \rangle$ the
expected description complexity of $\gmlu$, that is,
$\langle C\rangle = 
\sum_{i \in I} p_{\equivalence_{\gmlu}}(M_i) C_{\gmlu}(M_i)$.
The class $\mathcal{M}$ will be clear from the context. Note
that trivially the same expected value is obtained for the description
complexity of \emph{models} over $\mathcal{M}$ if we
give every model $\mathfrak{M}\in \mathcal{M}$
the probability $1/|\mathcal{M}|$ (the uniform distribution).

%

The expected value $\langle H_B \rangle$ of $H_B$ with respect to the distribution $p_\equivalence$ is closely related to the \textbf{Shannon entropy} $H_S(\equivalence)$ of $\equivalence$, which we define as the expected value of the random variable $M_i \mapsto -\log(p_\equivalence(M_i))$. More explicitly, we define that
$H_S(\equivalence) := - \sum_{i \in I} p_\equivalence(M_i) \log(p_\equivalence(M_i))$.
Note that this expression is always well-defined, since $M_i \neq \varnothing$, for every $i \in I$. The following result is established in Appendix \ref{appendix:shannon_boltzmann}. Note that the expected value of $H^\equivalence_B$ over the uniform distribution on $\mathcal{M}$ is equal to $\langle H_B \rangle$, so the result could also be formulated for single models.

\begin{proposition}\label{prop:shannon_boltzmann}
   Let $\mathcal{M}$ be a finite class of models and $\equivalence\, \subseteq \mathcal{M}\times \mathcal{M}$ an
equivalence relation over $\mathcal{M}$. Then
$H_S(\equivalence) + \langle H_B \rangle = \log(|\mathcal{M}|)$.

\end{proposition}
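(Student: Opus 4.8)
The plan is to compute $H_S(\equivalence)$ directly from its definition and recognize the two resulting summands. Write $p_i := p_\equivalence(M_i) = |M_i|/|\mathcal{M}|$. By definition,
\[
H_S(\equivalence) = -\sum_{i\in I} p_i \log(p_i) = -\sum_{i\in I} p_i\bigl(\log|M_i| - \log|\mathcal{M}|\bigr),
\]
using that $\log(a/b) = \log a - \log b$ for positive reals, which is legitimate since each $M_i\neq\varnothing$ so $|M_i|\geq 1$ and $|\mathcal{M}|\geq 1$. Splitting the sum gives
\[
H_S(\equivalence) = -\sum_{i\in I} p_i\log|M_i| + \log|\mathcal{M}|\sum_{i\in I} p_i.
\]

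Now I would identify the two pieces. Since the $M_i$ partition $\mathcal{M}$, we have $\sum_{i\in I} |M_i| = |\mathcal{M}|$, hence $\sum_{i\in I} p_i = 1$; so the second term is exactly $\log|\mathcal{M}|$. For the first term, recall $H_B(M_i) = \log|M_i|$ by definition, so $\sum_{i\in I} p_i\log|M_i| = \sum_{i\in I} p_i H_B(M_i) = \langle H_B\rangle$, the expected value of the random variable $M_i\mapsto H_B(M_i)$ with respect to $p_\equivalence$. Substituting,
\[
H_S(\equivalence) = -\langle H_B\rangle + \log|\mathcal{M}|,
\]
which rearranges to $H_S(\equivalence) + \langle H_B\rangle = \log|\mathcal{M}|$, as claimed.

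There is essentially no obstacle here: the statement is a bookkeeping identity, and the only points requiring a moment's care are that $I$ is finite (so all sums converge and rearrangement is trivially valid) and that every class is nonempty (so the logarithms are defined and the splitting $\log p_i = \log|M_i| - \log|\mathcal{M}|$ is valid), both of which are guaranteed by $\mathcal{M}$ being a finite class of models and $\equivalence$ an equivalence relation. The remark that the same identity holds with $\langle H_B\rangle$ replaced by the expectation of $H_B^{\equivalence}$ over the uniform distribution on $\mathcal{M}$ follows because $H_B^{\equivalence}(\MM) = H_B(M)$ is constant on each class $M$, so averaging it uniformly over $\mathcal{M}$ reweights by $|M_i|/|\mathcal{M}| = p_i$, recovering $\langle H_B\rangle$.
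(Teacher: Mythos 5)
Your proof is correct and follows essentially the same route as the paper's: both expand $\log p_\equivalence(M_i)$ as $\log|M_i| - \log|\mathcal{M}|$, split the sum, and use $\sum_i p_i = 1$; the paper merely arranges the computation as a single chain of identities for $H_S(\equivalence) + \langle H_B\rangle$ rather than isolating $H_S(\equivalence)$ first. Your added remarks on nonemptiness of the classes and on the uniform-distribution reformulation are accurate but not needed beyond what the paper already notes.
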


\noindent
There exist 
results in the literature on entropy similar to the above, see, e.g.,
\cite{conceptsinthermalphysics} and \cite{zupanovic}. 
By the proposition, both the
Shannon entropy of $\equivalence$ and the expected Boltzmann entropy of $\equivalence$ cannot be simultaneously 
large (meaning close to their maximum value $\log(|\mathcal{M}|)$).
Indeed, suppose we do not alter $\mathcal{M}$, so $\log(|\mathcal{M}|)$ is constant. Now suppose we 
alter $\equivalence$ so that $H_S(\equivalence)$ is increased. This lowers $\langle H_B \rangle$. Vice versa, increasing $\langle H_B \rangle$ lowers $H_S(\equivalence)$. Shannon entropy and expected Bolzmann entropy are  complementary quantities, summing to a constant.

\section{MLU: The largest class has maximal description\\ complexity}

Fix $\tau = \{p_1,\dots,p_k\}$ and let $\mathrm{Mod}_n(\tau)$ be the set of Kripke models over $\tau$ and the \emph{fixed universe} $W~=~\{1, \dots, n\}$. 
In this section we consider the equivalence $\equivalence_{\uml[\tau]}$ as defined above and denote it by $\equivalence$. We show that in this canonical partition, the largest class, which is the one with the largest Boltzmann entropy, has maximal $\uml[\tau]$-description complexity.
%
%

The equivalence classes of $\equivalence$ can be described easily. For Kripke models $\MM_1, \MM_2 \in \mathrm{Mod}_n(\tau)$, we have
$\MM_1 \equivalence \MM_2 \Leftrightarrow \{\tp_{\MM_1}[w] \mid w \in W\} = \{\tp_{\MM_2}[w] \mid w\in W\}$.
That is, each equivalence class is uniquely determined by the $1$-types realized in it. As the number of $1$-types over $\tau$ is $2^k$, the number of equivalence classes of $\equivalence$ is $2^{2^k} - 1$. Given a set $\Pi \subseteq \boldsymbol{\alpha}_\tau$, we let $M_\Pi$ be the equivalence class that has the models that realize exactly the $1$-types in $\Pi$.

Note the equivalence class $M_\Pi$ of any set $\Pi \subseteq \boldsymbol{\alpha}_\tau$ can be defined by the following formula:

\smallskip 

$
\varphi(\Pi) := \bigwedge\limits_{\pi \in \Pi} \di \psi(\pi) \land \bo (\bigwedge\limits_{\pi \in \boldsymbol{\alpha}_\tau \setminus \Pi} \neg \psi(\pi)),
$

\smallskip

\noindent
where $\psi(\pi)$ is the conjunction of the literals in the 1-type $\pi$. For $\Pi \neq \boldsymbol{\alpha}_\tau$, the size of the formula $\varphi(\Pi)$ is $k2^{k+1}+|\Pi|$. For $\Pi = \boldsymbol{\alpha}_\tau$, the size is $k2^{k+1} + 2^k - 1$. We see that the classes with at most one type missing are tied for the largest formula size.


Using, e.g., standard probabilistic arguments, one can show that for Kripke models of size $n$, where $n$ is much larger than $2^k$, the largest equivalence class is the one realizing all the $1$-types. In fact, the largest class will contain ``almost all'' of the models of size $n$.

\begin{proposition}\label{prop:largest_equivalence_class}
    If $n$ is large with respect to $k$, then $|M_\Pi| < |M_{\boldsymbol{\alpha}_\tau}|$,
    for every $\Pi \subset \boldsymbol{\alpha}_\tau$.
\end{proposition}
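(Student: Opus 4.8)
The plan is to count $|M_\Pi|$ directly. A model in $M_\Pi$ is nothing but a function $W \to \Pi$ that uses every $1$-type in $\Pi$, i.e., a surjection from $W$ onto $\Pi$; write $\mathrm{Surj}(n,m)$ for the number of surjections from an $n$-element set onto an $m$-element set, so that $|M_\Pi| = \mathrm{Surj}(n,|\Pi|)$. Put $T := |\boldsymbol{\alpha}_\tau| = 2^k$ and $t := |\Pi|$. If $\Pi = \varnothing$ (in particular if $k = 0$), then $|M_\Pi| = 0 < |M_{\boldsymbol{\alpha}_\tau}|$ and we are done, so we may assume $1 \le t \le T-1$, whence $T \ge 2$.

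First I would record two elementary estimates. For the upper bound, every surjection onto $\Pi$ is in particular a function $W \to \Pi$, so $|M_\Pi| = \mathrm{Surj}(n,t) \le t^n \le (T-1)^n$, a bound uniform over all proper subsets $\Pi \subset \boldsymbol{\alpha}_\tau$. For the lower bound on $|M_{\boldsymbol{\alpha}_\tau}|$, I would bound the number of functions $W \to \boldsymbol{\alpha}_\tau$ that fail to be surjective: by the union bound, at most $\binom{T}{1}(T-1)^n = T(T-1)^n$ functions avoid some fixed $1$-type, hence $|M_{\boldsymbol{\alpha}_\tau}| = \mathrm{Surj}(n,T) \ge T^n - T(T-1)^n$. (Alternatively one could invoke the asymptotics $\mathrm{Surj}(n,m) \asymptotic m^n$ for fixed $m$, but the union bound suffices and is self-contained.)

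It then suffices to find $n$, depending only on $k$, for which $(T-1)^n < T^n - T(T-1)^n$. Rearranging, this is equivalent to $(T+1)(T-1)^n < T^n$, i.e., to $\left(1 + \tfrac{1}{T-1}\right)^{\!n} > T+1$. Since $T \ge 2$, the left-hand side is an increasing exponential in $n$ that tends to infinity, so the inequality holds for all $n$ above some threshold $n_0 = n_0(T) = n_0(k)$. For such $n$ we obtain $|M_\Pi| \le (T-1)^n < T^n - T(T-1)^n \le |M_{\boldsymbol{\alpha}_\tau}|$ for every proper $\Pi \subset \boldsymbol{\alpha}_\tau$ simultaneously, which is the claim.

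As for difficulty, there is no real obstacle here: the argument is a short counting computation. The only points requiring a little care are making the phrase ``$n$ large with respect to $k$'' precise — the threshold $n_0$ depends on $2^k$ — and disposing of the degenerate case $\Pi = \varnothing$, where $M_\Pi$ is empty.
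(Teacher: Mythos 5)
Your proof is correct and rests on essentially the same idea as the paper's: a union bound over the $1$-types that a model might fail to realize, showing that $M_{\boldsymbol{\alpha}_\tau}$ contains all but a vanishing fraction of the $2^{kn}$ models. The only cosmetic difference is that you phrase this as a deterministic count (bounding each $|M_\Pi|$ by $(2^k-1)^n$ and $|M_{\boldsymbol{\alpha}_\tau}|$ from below by $2^{kn} - 2^k(2^k-1)^n$), whereas the paper runs the same union bound probabilistically and concludes via the observation that a class containing more than half of all models must be the largest.
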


On the other hand, we can show that the equivalence class containing models which realize all the $1$-types is one of the most difficult ones to define. To prove this, we will start by introducing a formula size game for $\uml[\tau]$.

The formula size game for $\uml[\tau]$, denoted $\fs^\tau_{r_0}(\AA_0, \BB_0)$ has two players: Samson and Delilah. We refer to them as S and D, or he and she, respectively. The game has three parameters: a natural number $r_0 \geq 1$ and two sets of Kripke-models $\AA_0$ and $\BB_0$. Positions of the game are of the form $(r, \AA, \BB)$ and the starting position is $(r_0, \AA_0, \BB_0)$. 

In each position, S makes a move. The moves available for S in position $(r, \AA, \BB)$ are: 
%
\begin{itemize}
    \item \propmove: S chooses a $\tau$-literal $\alpha$. The game ends. If $\AA \vDash \alpha$ and $\BB \vDash \neg \alpha$, then S wins. Otherwise D wins. S cannot make this move if he has not made a \dimove\ so far.
    \item \lormove: S chooses $\AA_1, \AA_2 \subseteq \AA$ such that $\AA_1 \cup \AA_2 = \AA$ and $r_1, r_2 \geq 1$ such that $r_1 + r_2 + 1 = r$. D chooses whether the next position is $(r_1, \AA_1, \BB)$ or $(r_2, \AA_2, \BB)$.
    \item \landmove: The same as a \lormove\ with the roles of $\AA$ and $\BB$ switched.
    \item \dimove: For every $(\MM, w) \in \AA$, S chooses $v \in W$. Let $\AA'$ be the set of models $(\MM, v)$ chosen this way. Let 
    $\BB' := \{(\MM, v) \mid (\MM, w) \in \BB \text{ for some } w \in W, v \in W\}$.
    The next position of the game is $(r-1, \AA', \BB')$. S cannot make this move if $r = 1$.
    \item \bomove: The same as a \dimove\ with the roles of $\AA$ and $\BB$ switched.
\end{itemize}

\begin{theorem}\label{pelitoimii}
   The following statements are equivalent:
   \begin{enumerate}
       \item S has a winning strategy in the game $\fs^\tau_r(\AA, \BB)$.
       \item There is $\varphi \in \uml[\tau]$ with size at most $r$ such that $\AA \vDash \varphi$ and $\BB \vDash \neg\varphi$.
   \end{enumerate}
\end{theorem}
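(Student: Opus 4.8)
The plan is to prove the equivalence by induction on $r$, establishing both directions simultaneously, since the structure of an optimal formula and the structure of a winning strategy for Samson mirror each other move-for-move. The natural statement to induct on is the biconditional ``S has a winning strategy in $\fs^\tau_r(\AA,\BB)$ iff there is $\varphi \in \uml[\tau]$ with $\size(\varphi) \le r$, $\AA \vDash \varphi$ and $\BB \vDash \neg\varphi$'', keeping $\AA$ and $\BB$ universally quantified so that the induction hypothesis applies to the subgames/subformulas produced by each move.

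For the direction from a small formula to a winning strategy, I would take a formula $\varphi$ of size $\le r$ separating $\AA$ from $\BB$ and let Samson play according to its outermost connective: if $\varphi = \psi_1 \lor \psi_2$, then since $\AA \vDash \psi_1 \lor \psi_2$ each model in $\AA$ satisfies $\psi_1$ or $\psi_2$, giving the required partition $\AA_1 \cup \AA_2 = \AA$ with $\AA_i \vDash \psi_i$, while $\BB \vDash \neg\psi_1 \land \neg\psi_2$ ensures $\BB \vDash \neg\psi_i$ for both $i$; the sizes satisfy $\size(\psi_1) + \size(\psi_2) + 1 = \size(\varphi) \le r$, so we can pick $r_1, r_2$ with $r_1 + r_2 + 1 = r$ and $\size(\psi_i) \le r_i$, and whichever subgame D picks, the IH gives S a winning strategy. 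The $\land$-case is dual. For $\varphi = \di\psi$: since $\AA \vDash \di\psi$, for each $(\MM,w) \in \AA$ there is some $v \in W$ with $(\MM,v) \vDash \psi$; S chooses these witnesses to form $\AA'$, and $\AA' \vDash \psi$. Crucially, $\BB \vDash \neg\di\psi$ means $\neg\psi$ holds everywhere in every model of $\BB$, so the set $\BB'$ of all pointed variants of models in $\BB$ satisfies $\BB' \vDash \neg\psi$; since $\size(\psi) = \size(\varphi) - 1 \le r - 1$, the IH applies. The $\bo$-case is dual. For a literal $\alpha$ (which in $\uml[\tau]$ can only occur under a modality, so this case arises after at least one $\di$- or $\bo$-move has reset the relevant model sets): $\AA \vDash \alpha$ and $\BB \vDash \neg\alpha$ is exactly the winning condition for the $p$-move, and the side condition ``S cannot make a $p$-move before a $\di$-move'' is compatible because syntactically literals sit inside modalities.

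For the converse, from a winning strategy I would read off a separating formula by recursion on the game tree of S's strategy: a $p$-move on $\alpha$ yields the formula $\alpha$ of size $1$; a $\lor$-move with subgames on $(\AA_1,\BB)$ and $(\AA_2,\BB)$ and parameters $r_1, r_2$ yields, by IH, formulas $\psi_1, \psi_2$ of sizes $\le r_1, \le r_2$ with $\AA_i \vDash \psi_i$ and $\BB \vDash \neg\psi_i$, and then $\psi_1 \lor \psi_2$ has size $\le r_1 + r_2 + 1 = r$, satisfies $\AA = \AA_1 \cup \AA_2 \vDash \psi_1 \lor \psi_2$, and $\BB \vDash \neg\psi_1 \land \neg\psi_2 = \neg(\psi_1 \lor \psi_2)$; the $\land$-move is dual. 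A $\di$-move yields by IH a $\psi$ of size $\le r-1$ with $\AA' \vDash \psi$ and $\BB' \vDash \neg\psi$, whence $\di\psi$ has size $\le r$, $\AA \vDash \di\psi$ (each model of $\AA$ has the chosen witness satisfying $\psi$), and $\BB \vDash \neg\di\psi$ because $\BB'$ contains \emph{every} pointed variant of every model in $\BB$ and all of them satisfy $\neg\psi$; the $\bo$-move is dual.

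The main subtlety — and the step I would be most careful with — is the bookkeeping around the $\di$- and $\bo$-moves and the literal side-condition: one must check that the asymmetric definition of $\BB'$ (all pointed variants, versus S's chosen witnesses for $\AA'$) is exactly what makes $\BB \vDash \neg\di\psi$ equivalent to $\BB' \vDash \neg\psi$, and that the grammar of $\uml[\tau]$ forces every literal to lie under a modality, matching the game rule that a $p$-move requires a prior $\di$-move. A secondary point is handling the point-free satisfaction relation $\MM \vDash \varphi$ correctly: since truth of $\uml$-formulas is independent of the evaluation point, the notation $\AA \vDash \varphi$ for a set of \emph{pointed} models is unambiguous, but one should state this reduction explicitly at the start of the proof so the induction goes through cleanly. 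Everything else is routine structural matching between connectives and moves.
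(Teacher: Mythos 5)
Your proposal is correct and follows exactly the route the paper intends: the paper's proof of Theorem \ref{pelitoimii} is just the remark ``simple proof by induction'' with a pointer to the analogous game for basic modal logic in the cited literature, and your simultaneous induction on $r$ with the move-by-move matching of connectives to game moves (including the careful treatment of the asymmetric $\BB'$ in the $\di$-move and the literal-under-modality side condition) is precisely that induction spelled out. No gaps.
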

\begin{proof}
    Simple proof by induction. A version for basic modal logic can be found in \cite{HellaV19}.
\end{proof}

Suppose that $\pi_1,\dots,\pi_n$, where $n = 2^{|\tau|}$, enumerates all the $1$-types over $\tau$. Let $\MM_0$ denote a Kripke model with domain $\{1,\dots,n\}$ and with the property that for every $1\leq i\leq n$ the $1$-type realized by $i$ is $\pi_i$. For every $i\neq j$, we let $\MM_{i,j}$ denote the Kripke model obtained from $\MM_0$ by specifying that the $1$-type of $i$ is $\pi_j$.  We further denote $\MM_i := \MM_{i,1}$ for $2 \leq i \leq n$ and $\MM_1 := \MM_{1,2}$. Each model $\MM_i$ is now missing the type $\pi_i$ and is otherwise identical to $\MM_0$. We let $\AA_0 = \{(\MM_0,1)\} \text{ and } \BB_0 = \{(\MM_{i},1) \mid 1 \leq i \leq n\}$.
%
%
%
We will next show that separating these two sets requires a large $\uml[\tau]$ formula. 


\begin{lemma}\label{lem:umldwins}
    D has a winning strategy in the game $\fs^\tau_{k2^{k+1}+2^k-2}(\AA_0,\BB_0)$.
\end{lemma}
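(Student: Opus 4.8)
The plan is to exhibit an explicit winning strategy for D in the game $\fs^\tau_{k2^{k+1}+2^k-2}(\AA_0,\BB_0)$, using the structure of $\AA_0 = \{(\MM_0,1)\}$ and $\BB_0 = \{(\MM_i,1) \mid 1 \leq i \leq n\}$. The key observation is that since $\AA_0$ is a singleton realizing every $1$-type, any $\di$-move reduces $\AA$ to a single pointed model $(\MM_0,v)$ realizing the type $\pi_v$, while the matching $\BB'$ contains every pointed model $(\MM_i, u)$; and since $\bo$-moves act on $\BB$, which after the first $\bo$-move is essentially ``all of $\MM_i$ for all $i$'', D will want to retain a model $\MM_i$ whose realized types are still indistinguishable from those of $\MM_0$ given the remaining resources. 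First I would set up an invariant: D maintains that $\AA$ is a set of pointed models all drawn from $\MM_0$, and $\BB$ still contains, for each $1$-type $\pi$, a pointed model whose first component agrees with the corresponding $\AA$-side model on $\pi$ except on one ``sacrificed'' type --- and that the resource $r$ is too small to both name all the remaining types via $\di$-moves and exclude the one missing type via a $\bo$-move.

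The core counting is the following. To win, S ultimately must, on some branch, either (i) force a $\di$-move that isolates the type $\pi_i$ which is present in $\MM_0$ but absent in the surviving $\MM_i \in \BB$ --- but D gets to choose which $\MM_i$ survives a $\bo$-move, and can always steer away from whichever type S has ``used up'' --- or (ii) distinguish via a $\bo$-move on the $\BB$-side stating that some type is absent, which costs resources proportional to spelling out the full $1$-type conjunction $k$ times over the complement $\boldsymbol{\alpha}_\tau \setminus \Pi$. The bound $k2^{k+1}+2^k-2$ is exactly one less than the size $k2^{k+1}+2^k-1$ of the natural defining formula $\varphi(\boldsymbol{\alpha}_\tau)$, so the strategy must show that every way of spending $k2^{k+1}+2^k-2$ resources leaves at least one type uncovered in a way D can exploit. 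Concretely, I would track a potential function on positions --- something like ``(number of types D still needs to protect) weighted by $k$, plus the cost of the exclusion clause'' --- and show each S-move decreases the available resource by at least as much as it decreases the potential, so D survives to a \propmove\ position where $\AA \vDash \alpha \Leftrightarrow \BB \vDash \neg\alpha$ fails, i.e., some $\MM_i$-copy in $\BB$ realizes the same literal as the $\MM_0$-copy in $\AA$.

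The main obstacle I anticipate is the bookkeeping after a $\lor$-move or $\land$-move: these split the resource as $r_1 + r_2 + 1 = r$ and let D pick the branch, but the $\AA$ (resp. $\BB$) side splits as a cover $\AA_1 \cup \AA_2 = \AA$, so D must argue that on at least one branch the remaining resource still fails to distinguish the surviving pointed models. This requires a subadditivity-type argument: the ``cost'' of the cheapest distinguishing formula for $(\AA,\BB)$ is at most the cost for $(\AA_1,\BB) $ plus that for $(\AA_2,\BB)$ plus one, so if the total budget is below the cost for $(\AA,\BB)$, then on one of the two branches the budget is below the corresponding branch-cost. Handling the interaction of this split with the $\di$/$\bo$ structure --- in particular ensuring the ``sacrificed type'' is chosen consistently and that D's choice of surviving $\MM_i$ after a $\bo$-move remains coherent across all the conjuncts --- is the delicate part, and is presumably where the precise value $k2^{k+1}+2^k-2$ (rather than something slightly smaller) gets pinned down. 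I would organize the argument by a careful case analysis on S's move type, maintaining the invariant and the resource inequality through each case, and close by checking the base case where $r$ forces a \propmove.
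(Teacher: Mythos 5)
Your overall plan---maintain a resource-versus-potential invariant for D and do a case analysis on S's moves, with a subadditivity argument for the $\lor$- and $\land$-moves---is the same architecture as the paper's proof, which defines a ``hardness'' $h(P)$ of a position and shows D can always keep $r < h(P)$. However, the proposal has a genuine gap: the potential function is never actually defined. You write ``something like (number of types D still needs to protect) weighted by $k$, plus the cost of the exclusion clause'' and concede that pinning down the exact value $k2^{k+1}+2^k-2$ is ``presumably'' where the delicate work lies. That is precisely the entire content of the lemma. The paper's hardness assigns $2k$ (not $k$) to each type that is still ``fully protected,'' assigns $2\cdot(\text{number of one-proposition neighbours of } \pi_i \text{ surviving in } \BB)-1$ to the single type that S has already attacked with a \dimove, and adds the count of positive-hardness types minus one; each of these three ingredients is needed to make the initial hardness equal $2k2^k+2^k-1$ and to make every move lose at most as much hardness as resource. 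A weight of $k$ per type, as you suggest, gives a bound of roughly $k2^k$, which is too weak by a factor of two in the dominant term.

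There is also a misreading of the game that would derail the strategy as you describe it. You say ``D gets to choose which $\MM_i$ survives a \bomove, and can always steer away from whichever type S has used up.'' In this game D makes no choices during \dimove s or \bomove s: in a \bomove\ S picks one successor for \emph{each} pointed model in $\BB$, and $\AA$ is blown up to all points of all its models; nothing is discarded, and symmetrically for \dimove s. D's only decisions are which branch to take after a \lormove\ or \landmove. Consequently the strategic picture cannot be ``D protects a surviving counterexample model''; it has to be, as in the paper, that D's branch choices preserve enough aggregate hardness that after any \dimove\ isolating some $(\MM_0,i)$, the set $\BB'$ still contains points of $\MM_i$ whose types differ from $\pi_i$ by a single proposition, so that no affordable propositional formula separates the two sides. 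Without a concrete potential and with the move mechanics corrected, the proposal does not yet constitute a proof.
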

\begin{proof}
    We use the following notation for the set of different underlying models that occur in a set $X$ of pointed models:
    $\mods(X) = \{\MM \mid (\MM, i) \in \AA \text{ for some } i\}$.
    
    We define a measure for a position of the game called hardness. Let $\pi_i$ be a type and let $P = (r, \AA, \BB)$ 
    be a position of the game. We define four different kinds of types and the hardness of those types as follows:
    \begin{enumerate}
        \item If no \dimove s have been made in the game so far, $\AA \neq \emptyset$ and $\MM_i \in \mods(\BB)$, then $\pi_i$ is of kind 1 and $h_i(P) = 2k$. 
        \item Otherwise, if there are propositionally equivalent $(\MM_0, j) \in \AA$ and $(\MM_i, l) \in \BB$, then $\pi_i$ is of kind 2 and $h_i(P) = 2k$.
        \item Otherwise, if $(\MM_0, i) \in \AA$ and $\MM_i \in \mods(\BB)$, then $\pi_i$ is of kind 3 and
        \[
        h_i(P) = 2\cdot|\{(\MM_i, j) \in \BB \mid j \neq i, \pi_i \text{ and } \pi_j \text{ differ by exactly one proposition}\}|-1.
        \]
        \item Otherwise, $\pi_i$ is of kind 4 and $h_i(P) = 0$.
    \end{enumerate}
    We further denote the number of types with positive hardness by $\poshard(P)$ and define the hardness $h(P)$ of the position $P$ as
    $h(P) = \sum\limits_{1 \leq i \leq n}h_i(P) + \poshard(P) -1.$
    
    \medskip
    
    We will describe the winning strategy for D in terms of maintaining the following two conditions in each position $P$ of the game:
    \begin{enumerate}
        \item[(a)] $r < h(P)$,
        \item[(b)] there is at most one type of kind 3 in position $P$.
    \end{enumerate} We will show that while these conditions hold, S cannot win. Since the resource $r$ of S will run out eventually, this is a winning strategy for D.
    
    In the starting position $P_0$ no \dimove s have been made and $\MM_i \in \mods(\BB_0)$ for each $1 \leq i \leq n$ so all types $\pi_i$ are of kind 1 and have $h_i(P_0) = 2k$. Thus condition (b) holds and
    
    \medskip
    
    $
    r = k2^{k+1}+2^k-2 < k2^{k+1}+2^k-1 = 2k2^k + 2^k - 1 = h(P_0)
    $
    
    \medskip
    

    \textbf{\propmove}: In each position $P$ of the game, we have $r \geq 1$ so while $r < h(P)$ holds, we have $h(P) \geq 2$. Using this, we show that any \propmove\  made by S while $r < h(P)$ leads to a win for D. If no \dimove s have been made, then S cannot make a \propmove. 
    If there is a type $\pi_i$ of kind 2, then there are propositionally equivalent $(\MM_0, j) \in \AA$ and $(\MM_i, l) \in \BB$ so no literal separates them. 
    If neither of the above hold, then by condition (b), there is a type $\pi_i$ of kind 3 with $(\MM_0, i) \in \AA$ and $(\MM_i, j), (\MM_i, l) \in \BB$, where $\pi_j$ and $\pi_l$ differ from $\pi_i$ by exactly one proposition. Again no literal separates $\AA$ and $\BB$. 
    
    %
    
    
    \textbf{\lormove}: Similar to the \landmove\ case below. Full details in the Appendix.

    \textbf{\landmove}: We show that one of the positions $P_1$, $P_2$ satisfies the conditions (a) and (b). 

   Let $\BB_1, \BB_2 \subseteq \BB$ and $r_1, r_2 \geq 1$ be the choices of S. Let $\pi_i$ be a type. If $\pi_i$ is of kind 1, then  $\MM_i \in \mods(\BB_1)$ or $\MM_i \in \mods(\BB_2)$ so $\pi_i$ is still of kind 1 and $h_i(P_1) = 2k$ or $h_i(P_2) = 2k$. If $\pi_i$ is of kind 2 with propositionally equivalent models $(\MM_0, j) \in \AA$ and $(\MM_i, l) \in \BB$, then $(\MM_i, l) \in \BB_1$ or $(\MM_i, l) \in \BB_2$ so $\pi_i$ is still of kind 2 and $h_i(P_1) = 2k$ or $h_i(P_2) = 2k$.

    Finally if $\pi_i$ is a type of kind 3, then S can split the models $(\MM_i, j) \in \BB$, where $\pi_i$ and $\pi_j$ differ by one proposition, between the sets $\BB_1$ and $\BB_2$.

    Assume that S puts all these models on the same side. Then $h_i(P_1) = h_i(P)$ or $h_i(P_2) = h_i(P)$. Thus $h_i(P_1) + h_i(P_2) \geq h_i(P)$. Additionally $\poshard(P_1) + \poshard(P_2) \geq \poshard(P)$ so 
    %
    \begin{align*}
    h(P_1) + h(P_2) &= \sum\limits_{1\leq i\leq n}h_i(P_1) +  \sum\limits_{1\leq i\leq n}h_i(P_2) + \poshard(P_1) + \poshard(P_2) - 2 \\
    &\geq \sum\limits_{1\leq i\leq n}h_i(P) + \poshard(P) - 1 - 1 = h(P) - 1.
    \end{align*}
    Now $r_1 + r_2 = r - 1 < h(P) - 1 \leq h(P_1) + h(P_2)$ so we have $r_1 < h(P_1)$ or $r_2 < h(P_2)$.
    
    Now assume that S splits some models $(\MM_i, j)$ to both sides. Now $h_i(P_1) + h_i(P_2) \geq h_i(P)-1$. In addition, the type $\pi_i$ has positive hardness in both positions $P_1$ and $P_2$ so $\poshard(P_1) + \poshard(P_2) \geq \poshard(P) + 1$. These two deviations from the above case, that only concern the single type $\pi_i$ of kind 3, cancel each other out so again $h(P_1) + h(P_2) \geq h(P) - 1$ and therefore $r_1 < h(P_1)$ or $r_2 < h(P_2)$.
    Finally, all types are of the same kind as in position $P$ so condition (b) holds.
    
    \textbf{\dimove}: 
    Let $(\MM_0, i)$ be a choice of S. For each $j \neq i$ with $\MM_j \in \mods(\BB)$, we have $(\MM_j, i) \in \BB'$ so $\pi_j$ is a type of kind 2 and $H_j(P') = 2k$. If there are multiple versions of $\MM_0$ in $\AA$ and S makes another choice $(\MM_0,l)$, then all types $\pi_j$ with $\MM_j \in \mods(\BB)$ work the same way. If S only chooses $(\MM_0, i)$ and we have $\MM_i \in \mods(\BB)$, then $\pi_i$ becomes a type of kind 3 with $(\MM_i, j) \in \BB'$ for all $j \neq i$ so $h_i(P') = 2k-1$. We additionally note that by the definition of hardness, $h(P) \leq 2k \cdot |\mods(\BB)| + |\mods(\BB)|-1$. Thus 

\smallskip

$
    h(P') \geq 2k \cdot |\mods(\BB)|-1 + |\mods(\BB)|-1 \geq h(P)-1 > r-1 = r'
$

\smallskip

\noindent
and condition (b) is maintained.

    \textbf{\bomove}: 
    For each $\MM_i \in \mods(\BB)$, S chooses at least one $(\MM_i, l) \in \BB'$. Let $\pi_j$ be the type this model realizes. Now $(\MM_0, j) \in \AA'$ realizes the same type, $\pi_i$ is of kind 2 and $h_i(P') = 2k$. Thus $h(P') = 2k \cdot |\mods(\BB)|+ |\mods(\BB)|-1 \geq h(P) > r-1 = r'$ and condition (b) holds.
\end{proof}

We have shown that the largest class $M_{\boldsymbol\alpha_\tau}$ requires a formula of size at least $k2^{k+1}+2^k-1$ to define. Since any of the classes can be defined via a formula of \emph{precisely} this size, we see that in the case of $\uml[\tau]$ the largest class is maximally difficult to define.

\begin{proposition}
    The largest equivalence class $M_{\boldsymbol\alpha_\tau}$ of\ \ $\equivalence_{\uml[\tau]}$ has maximal $\mlu[\tau]$-description complexity.
\end{proposition}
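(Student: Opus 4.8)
The plan is to assemble the proposition directly from the three ingredients already in place: the explicit definability formulas, the lower bound from the formula size game, and a matching of constants. First I would recall that for every $\Pi\subseteq\boldsymbol\alpha_\tau$ the class $M_\Pi$ is definable by $\varphi(\Pi)$, whose size is $k2^{k+1}+|\Pi|$ when $\Pi\neq\boldsymbol\alpha_\tau$ and $k2^{k+1}+2^k-1$ when $\Pi=\boldsymbol\alpha_\tau$. Hence $C_{\mlu[\tau]}(M_\Pi)\le k2^{k+1}+2^k-1$ for \emph{every} $\Pi$, with equality forced only when $\Pi=\boldsymbol\alpha_\tau$ or $|\Pi|=2^k-1$. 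This establishes the upper bound side: no class has description complexity exceeding $k2^{k+1}+2^k-1$, and in particular $C_{\mlu[\tau]}(M_{\boldsymbol\alpha_\tau})\le k2^{k+1}+2^k-1$.

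Next I would derive the matching lower bound for $M_{\boldsymbol\alpha_\tau}$ from Lemma~\ref{lem:umldwins} together with Theorem~\ref{pelitoimii}. Observe that the model $\MM_0$ realizes all $1$-types, so $\MM_0\in M_{\boldsymbol\alpha_\tau}$, while each $\MM_i$ is missing exactly the type $\pi_i$ and therefore lies outside $M_{\boldsymbol\alpha_\tau}$. Consequently, any formula $\varphi$ defining $M_{\boldsymbol\alpha_\tau}$ satisfies $\MM_0\vDash\varphi$ and $\MM_i\vDash\neg\varphi$ for all $1\le i\le n$, i.e.\ $\AA_0\vDash\varphi$ and $\BB_0\vDash\neg\varphi$ in the notation set up before the lemma. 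By the contrapositive of Theorem~\ref{pelitoimii} and Lemma~\ref{lem:umldwins}, since D wins $\fs^\tau_{k2^{k+1}+2^k-2}(\AA_0,\BB_0)$, there is no such $\varphi$ of size at most $k2^{k+1}+2^k-2$; hence every defining formula has size at least $k2^{k+1}+2^k-1$, so $C_{\mlu[\tau]}(M_{\boldsymbol\alpha_\tau})\ge k2^{k+1}+2^k-1$. Combining the two bounds gives $C_{\mlu[\tau]}(M_{\boldsymbol\alpha_\tau})=k2^{k+1}+2^k-1$.

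Finally I would close the argument by noting that this value is the maximum possible over all equivalence classes: from the first paragraph every $M_\Pi$ with $\Pi\neq\boldsymbol\alpha_\tau$ also has $C_{\mlu[\tau]}(M_\Pi)\le k2^{k+1}+2^k-1$, so $C_{\mlu[\tau]}(M_{\boldsymbol\alpha_\tau})$ attains the maximum; therefore $M_{\boldsymbol\alpha_\tau}$ has maximal $\mlu[\tau]$-description complexity. One should also remark, to connect with Proposition~\ref{prop:largest_equivalence_class}, that for $n$ large with respect to $k$ this same class is the unique largest equivalence class and hence the one of maximal Boltzmann entropy, which is the point of the section.

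Since all the substantive work is already done in Lemma~\ref{lem:umldwins} and the construction of $\varphi(\Pi)$, there is no real obstacle here; the only mild subtlety to get right is bookkeeping of the additive constants — confirming that the game lemma's bound $k2^{k+1}+2^k-2$ is exactly one below the formula size $k2^{k+1}+2^k-1$ of $\varphi(\boldsymbol\alpha_\tau)$, so that the upper and lower bounds meet precisely rather than leaving a gap. I would double-check the $\size$ computation of $\varphi(\boldsymbol\alpha_\tau)$ against the definition (each $\di\psi(\pi)$ contributes $k$ for the $k$ literals plus $k-1$ for the conjunctions plus $1$ for $\di$, and similarly for the $\bo$ part) to make sure the constant $k2^{k+1}+2^k-1$ is correct, but this is routine.
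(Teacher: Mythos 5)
Your proposal is correct and follows exactly the paper's own route: the upper bound $k2^{k+1}+2^k-1$ for every class via the explicit formulas $\varphi(\Pi)$, combined with the matching lower bound for $M_{\boldsymbol\alpha_\tau}$ obtained from Lemma~\ref{lem:umldwins} and Theorem~\ref{pelitoimii} applied to the sets $\AA_0,\BB_0$ (noting $\MM_0\in M_{\boldsymbol\alpha_\tau}$ and $\MM_i\notin M_{\boldsymbol\alpha_\tau}$). You are in fact slightly more explicit than the paper about the contrapositive of the game theorem and the bookkeeping of constants, but the argument is the same.
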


\section{GMLU: Relating entropy and description complexity asymptotically}

Fix $\tau = \{p_1,\dots,p_k\}$ and let $\ell = 2^k$. A Kripke model $\MM$ with universe $W = \{1,\dots,n\}$ can be described in $\gmlu[\tau]$ up to isomorphism. Hence the equivalence classes of $\equivalence_{\gmlu[\tau]}$, hereafter denoted $\equivalence$, over $\mathrm{Mod}_n(\tau)$ are the isomorphism classes. Since $\MM$ can be described up to isomorphism by listing how many times each $1$-type is realized, there is a one-to-one correspondence between isomorphism classes and tuples $(n_1,\dots,n_\ell)$, where $n_1 + \dots + n_\ell = n$. We will use $[n_1,\dots,n_\ell]$ to denote the isomorphism class consisting of those Kripke models of size $n$ in which the $i$th type is realized precisely $n_i$-times. Note that 
$|[n_1,\dots,n_\ell]| = \binom{n}{n_1,\dots,n_\ell}$.

In this section we show that the expected Boltzmann entropy $\langle H_B \rangle$ is asymptotically $|\tau|$ times the expected $\gmlu[\tau]$-description complexity with respect to the distribution $p_\equivalence$.

\subsection{Expected Boltzmann entropy}\label{entropy}

In this subsection we will establish that
$\langle H_B \rangle \asymptotic  |\tau|n$.
Using Proposition \ref{prop:stirling_approximation} we get the following alternative asymptotic formula for $\langle H_B \rangle$
\begin{align*}
        &\sum_{n_1 + \dots + n_\ell = n} p_\equivalence([n_1,\dots,n_\ell]) \log\binom{n}{n_1,\dots,n_\ell} \\
        = & \sum_{n_1 + \dots + n_\ell = n} p_\equivalence([n_1,\dots,n_\ell]) \bigg(n\bigg(\log(n) - \sum_{i = 1}^\ell \frac{n_i}{n}\log(n_i)\bigg) + \Theta\big(\log(n)\big)\bigg) \\
        = & \bigg(\sum_{n_1 + \dots + n_\ell = n} p_\equivalence([n_1,\dots,n_\ell]) \bigg(\sum_{i=1}^\ell \frac{n_i}{n} \log \bigg(\frac{n}{n_i}\bigg)\bigg)\bigg)n + \Theta\big(\log(n)\big)
\end{align*}
We will show that
\begin{equation}\label{eq:rewritten_expected_boltzmann}
    \bigg(\sum_{n_1 + \dots + n_\ell = n} p_\equivalence([n_1,\dots,n_\ell]) \bigg(\sum_{i=1}^\ell \frac{n_i}{n} \log \bigg(\frac{n}{n_i}\bigg)\bigg)\bigg)n
\end{equation}
is asymptotically $|\tau|n$, which will of course entail that $\langle H_B \rangle \asymptotic |\tau|n$. Note that
\begin{equation}\label{eq:entropy_balls_bins}
    \sum_{i=1}^\ell \frac{n_i}{n} \log \bigg(\frac{n}{n_i}\bigg)
\end{equation}
is the Shannon entropy of the distribution on $\{1,\dots,\ell\}$ which assigns to each $1 \leq i \leq \ell$ the weight $\frac{n_i}{n}$. Thus we can use $\log(\ell) = |\tau|$ to bound the formula
\begin{equation}\label{eq:expedted_entropy}
    \sum_{n_1 + \dots + n_\ell = n} p_\equivalence([n_1,\dots,n_\ell]) \bigg(\sum_{i=1}^\ell \frac{n_i}{n} \log \bigg(\frac{n}{n_i}\bigg)\bigg)
\end{equation}
from above. Hence $|\tau|n$ is an upper bound on (\ref{eq:rewritten_expected_boltzmann}).

We will next bound (\ref{eq:expedted_entropy}) from below by using Proposition \ref{proposition:law_of_large_numbers}. For every $1 \leq i \leq \ell$ and $j \in \mathbb{Z}_+$ we let $X_j^i$ denote a random Bernoulli variable with success probability $2^{-|\tau|}$. Intuitively speaking, $X_j^i$ is an indicator function for the event ``the $j$th element received the $i$th $1$-type''. Now, for every $1 \leq i \leq \ell$ and for all $\delta > 0$ the law of large numbers implies that

\medskip

$\lim_{n \to \infty} \Pr\bigg[\bigg|\frac{1}{n}\sum_{j = 1}^n X_j^i - 2^{-|\tau|}\bigg| < \delta \bigg] = 1.$

\medskip

\noindent
Thus it follows from the union bound that the following probability
\begin{equation}\label{eq:prob_law_large_numbers}
\Pr\bigg[\forall \ 1 \leq i \leq \ell : \bigg|\frac{1}{n}\sum_{j = 1}^n X_j^i - 2^{-|\tau|}\bigg| < \delta \bigg]
\end{equation}
approaches $1$ as $n \to \infty$. Fix $\delta > 0$. For every $n$ we let $I_n^\delta$ denote the following set:
\[\bigg\{(n_1,\dots,n_\ell) \mid n_1 + \dots + n_\ell = n \text{ and } \forall \ 1\leq i \leq \ell: \bigg |\frac{n_i}{n} - 2^{-|\tau|} \bigg| < \delta \bigg\}.\]

The set $I_n^\delta$ includes the tuples $(n_1, \dots, n_\ell)$, where the numbers add up to $n$ and are very close to each other. The probability result above intuitively means that a randomly chosen tuple is almost always in $I_n^\delta$. Thus, roughly speaking, we only need to consider models, where the points are split between all of the types very evenly.

Let $n$ be large enough so that (\ref{eq:prob_law_large_numbers}) is larger than $(1 - \delta)$. For every $(n_1,\dots,n_\ell) \in I_n^\delta$ we want to estimate the formula (\ref{eq:entropy_balls_bins}) from below. Fix a tuple $(n_1,\dots,n_\ell) \in I_n^\delta$. Now, for every $1 \leq i \leq \ell$ we have $2^{-|\tau|} - \delta < n_i / n < 2^{-|\tau|} + \delta$, which also entails that
$n / n_i > 2^{|\tau|}/(1 + \delta2^{|\tau|})$.
Thus for every $1 \leq i \leq \ell$ we have that
\[2^{|\tau|}(2^{-|\tau|} - \delta) \log\bigg(\frac{2^{|\tau|}}{(1 + \delta2^{|\tau|})}\bigg) < \sum_{i=1}^\ell \frac{n_i}{n} \log \bigg(\frac{n}{n_i}\bigg).\]
Now we can bound the formula (\ref{eq:expedted_entropy}) from below by
\[2^{|\tau|}(2^{-|\tau|} - \delta) \log\bigg(\frac{2^{|\tau|}}{(1 + \delta2^{|\tau|})}\bigg) \cdot \sum_{(n_1,\dots,n_\ell) \in I_n^\delta} p_\equivalence([n_1,\dots,n_\ell]).\]
Notice that the right-hand side expresses the probability that a random $\tau$-model $\mathfrak{A}$ of size $n$ belongs to $[n_1,\dots,n_\ell]$, for some $(n_1,\dots,n_\ell) \in I_n^\delta$, which we know is at least $(1 - \delta)$, since we chose $n$ to be large enough. Thus we have, for every $\delta > 0$ and $n$ sufficiently large, the following lower bound for the formula (\ref{eq:expedted_entropy}):
\[f(\delta) := 2^{|\tau|}(2^{-|\tau|} - \delta) \log\bigg(\frac{2^{|\tau|}}{(1 + \delta2^{|\tau|})}\bigg) \cdot (1 - \delta).\]
Observe that $f(\delta) \to |\tau|$ as $\delta \to 0$. Hence, for every $\varepsilon > 0$ we have that $(1 - \varepsilon)|\tau| < f(\delta)$, for sufficiently small $\delta$. Combining this with our upper bound of $|\tau|n$ for (\ref{eq:rewritten_expected_boltzmann}) one can easily show that (\ref{eq:rewritten_expected_boltzmann}) is asymptotically $|\tau|n$. This concludes our proof of the following theorem.

\begin{theorem}\label{thm:expected_boltzmann}
    $\langle H_B \rangle \asymptotic |\tau|n$.
\end{theorem}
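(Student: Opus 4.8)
The plan is to reduce the claim to a statement about the expected Shannon entropy of the empirical $1$-type distribution of a uniformly random model of size $n$, and then to sandwich that quantity. First I would apply Stirling's approximation (Proposition~\ref{prop:stirling_approximation}) to each multinomial coefficient, obtaining
\[
\log\binom{n}{n_1,\dots,n_\ell} = n\sum_{i=1}^\ell \frac{n_i}{n}\log\!\Big(\frac{n}{n_i}\Big) + \Theta(\log n).
\]
Averaging over the isomorphism classes $[n_1,\dots,n_\ell]$ with the weights $p_\equivalence([n_1,\dots,n_\ell])$ gives $\langle H_B\rangle = n E_n + \Theta(\log n)$, where $E_n$ is the expected value of the Shannon entropy of the distribution $i\mapsto n_i/n$ on $\{1,\dots,\ell\}$; it thus suffices to prove $E_n\to |\tau|$. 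The upper bound $E_n\le |\tau|$ is immediate, since the Shannon entropy of any distribution on $\ell = 2^{|\tau|}$ points is at most $\log\ell = |\tau|$, whence $\langle H_B\rangle \le |\tau| n + \Theta(\log n)$.

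For the matching lower bound I would invoke the weak law of large numbers (Proposition~\ref{proposition:law_of_large_numbers}). Sampling a uniformly random model of size $n$ is the same as assigning each of the $n$ points an independent uniform $1$-type; writing $X^i_j$ for the indicator that point $j$ receives type $i$, each $X^i_j$ is Bernoulli with success probability $2^{-|\tau|}$, and $n_i = \sum_j X^i_j$. By the law of large numbers together with a union bound over the $\ell$ types, for every fixed $\delta>0$ the event that $\big|\frac{n_i}{n}-2^{-|\tau|}\big| < \delta$ holds simultaneously for all $i$ has probability tending to $1$ as $n\to\infty$. On this event the empirical type distribution is coordinatewise within $\delta$ of uniform, so its entropy is bounded below by an explicit $g(\delta)$ with $g(\delta)\to|\tau|$ as $\delta\to 0$; bounding the entropy by $0$ on the complementary event then yields $E_n \ge (1-o(1))g(\delta)$, hence $\liminf_n E_n \ge g(\delta)$ for every $\delta>0$, and letting $\delta\to 0$ gives $\liminf_n E_n \ge |\tau|$. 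Combined with the upper bound this gives $E_n\to |\tau|$, and therefore $\langle H_B\rangle \asymptotic |\tau| n$.

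The main obstacle is organizing the lower bound correctly: the two limits must be taken in the right order ($n\to\infty$ for each fixed $\delta$, then $\delta\to0$), and one needs a clean bound on the entropy of near-uniform distributions, since the naive estimate degrades near the boundary of the simplex. The cleanest route is to plug $2^{-|\tau|}-\delta < n_i/n < 2^{-|\tau|}+\delta$ directly into $\sum_i \frac{n_i}{n}\log\frac{n}{n_i}$ and simplify, producing a quantity of the form $2^{|\tau|}(2^{-|\tau|}-\delta)\log\frac{2^{|\tau|}}{1+\delta 2^{|\tau|}}$ which visibly tends to $|\tau|$. The remaining steps, namely the Stirling expansion, the union bound, and passing from $E_n\to|\tau|$ to the asymptotic equivalence $\langle H_B\rangle\asymptotic |\tau| n$, are routine.
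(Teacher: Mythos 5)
Your proposal is correct and follows essentially the same route as the paper: Stirling's approximation to rewrite $\log\binom{n}{n_1,\dots,n_\ell}$ as $n$ times the empirical type-distribution entropy, the bound $\log\ell=|\tau|$ for the upper estimate, and the weak law of large numbers with a union bound over the $\ell$ types for the lower estimate, arriving at the very same explicit function $2^{|\tau|}(2^{-|\tau|}-\delta)\log\frac{2^{|\tau|}}{1+\delta 2^{|\tau|}}$ and the same order of limits. No gaps.
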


\subsection{Expected description complexity}

In this subsection we show that $\langle C \rangle \asymptotic n$. Let $M$ be an equivalence class of $\equivalence$. For a $1$-type $\pi$ we denote $|\pi|_M := |\{w \in W \mid (\MM, w) \vDash \pi\}|$, where $\MM \in M$. The number $|\pi|_M$ is the number of points that satisfy the type $\pi$ in the models of the class $M$. Since we will focus on a single class $M$ we will omit the subscript in the sequel. Let $\pi_m$ be the  $1$-type with the largest number of points in the models of the class $M$. Let $I := \{1 \leq i \leq 2^{|\tau|} \mid |\pi| \geq 1\}$. The set $I$ consists of the indices of types that are realized in the class $M$. In this subsection we show that the formula size required to define such a class $M$ is in the order of $\min(n, 2(n-|\pi_m|))$.

For upper bounds, we define a class $M$ via two different formulas, one of them using the largest type $\pi_m$ defined above:
\begin{align*}
    \varphi_1 &:= \bigwedge\limits_{i \in I} \di^{\geq |\pi_i|} \psi(\pi_i) \\
    \varphi_2 &:= 
    \bo^{< 1}\big(\bigvee\limits_{i \in I} \psi(\pi_i)\big) \land \bigwedge\limits_{i \in I\setminus \{m\}} \di^{\geq |\pi_i|} \psi(\pi_i) \land \bigwedge\limits_{i \in I\setminus \{m\}} 
    \bo^{< |\pi_i| + 1} \neg\psi(\pi_i)
\end{align*}
It is easy to verify that $\size(\varphi_1) = n + \mathcal{O}_{|\tau|}(1)$ and $\size(\varphi_2) = 2(n-|\pi_m|)+\mathcal{O}_{|\tau|}(1)$.

For the lower bounds, we utilize a formula size game $\fsc^\tau_r(\AA, \BB)$ for $\gmlu[\tau]$. The rules of the game are the same as in the $\mlu[\tau]$-game except the \dimove s and \bomove s are replaced with the following new moves:
\begin{itemize}
    \item \ddimove: S chooses a number $d \in \Nset$. If $r \leq d$, the game ends and D wins. Otherwise, for every $(\MM, w) \in \AA$, S chooses $d$ different points $v \in W$. Let $\AA'$ be the set of models $(\MM, v)$ chosen this way. For every $(\MM, w) \in \BB$, S chooses $n-d+1$ different points $v \in W$. Let $\BB'$ again be the set of models chosen. The next position of the game is $(r-d, \AA', \BB')$.
    \item \bbomove: The same as a \ddimove\ with the roles of $\AA$ and $\BB$ switched.
\end{itemize}
The equivalent of Theorem \ref{pelitoimii} can be proved for this new game in a very similar manner.

We now define the starting model sets of our formula size game. As before, we assume the domain of the models is $W = \{1, \dots, n\}$. Let $\AA_0 := \{(\MM, 1)\}$, where $\MM \in M$. We additionally assume that the points 1 and 2 of the model are propositionally equivalent. We do not need to fix the model $\MM$ any more precisely but note that there is only one model in the set $\AA_0$. Now let $(i, j) \in I \times I$ with $i \neq j$ and let $w \in W$ be the largest number with $(\MM, w) \vDash \pi_i$. The model $\MM_{i \rightarrow j}$ has $(\MM_{i \rightarrow j}, w) \vDash \pi_j$ and is otherwise identical to $\MM$. In other words, $\MM_{i \rightarrow j}$ has one less point of the type $\pi_i$ and one more of the type $\pi_j$ compared to $\MM$. We let $\BB_0 := \{(\MM_{i\rightarrow j}, 1) \mid i, j \in I, i \neq j\}$. There are $|I|^2$ models in the set $\BB_0$. Note that all models in $\AA_0$ and $\BB_0$ have propositionally equivalent starting points. 

Let us now consider the formula size game $\fsc^\tau_{r_0}(\AA_0, \BB_0)$. For any position $(r, \AA, \BB)$ 
of this game, we define a directed graph $\G(\AA, \BB) := (V, E)$ by setting $V := I$ and $(i, j) \in E$ iff there are propositionally equivalent $(\MM, w) \in \AA$ and $(\MM_{i \rightarrow j}, v) \in \BB$. We call a set $C \subseteq \{i^+, i^- \mid i \in I\}$ a \textbf{cover} of $\G(\AA, \BB)$ if for every $(i, j) \in E$ we have $i^+ \in C$ or $j^- \in C$. The cost of a cover $C$ is 

\medskip

$
r(C) := \sum\limits_{i^+ \in C} |\pi_i|_M + \sum\limits_{i^- \in C} |\pi_i|_M.
$

\medskip

We are now ready for the crucial Lemma of this subsection.

\begin{lemma}\label{gmlulemma}
Let $P := (r, \AA, \BB)$ 
be a position of the game $\fsc^\tau_{r_0}(\AA_0, \BB_0)$ and let $R(P) := \min\{r(C) \mid C \text{ is a cover of } \G(\AA, \BB)\}$. If $r < R(P)$, then D has a winning strategy in the game from the position $P$.
\end{lemma}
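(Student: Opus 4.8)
The plan is to prove the lemma by induction on $r$ (equivalently, on the structure of the game tree), showing that whenever we are at a position $P = (r,\AA,\BB)$ with $r < R(P)$, then D can respond to any move of S so as to reach a new position $P'$ with $r' < R(P')$, and that S cannot win directly from $P$. The base of the argument is the \propmove: if $r < R(P)$ then $R(P) \geq 1$, so $\G(\AA,\BB)$ has at least one edge $(i,j)$, witnessed by propositionally equivalent $(\MM,w) \in \AA$ and $(\MM_{i\to j},v) \in \BB$; since these two pointed models agree on all literals, no literal separates $\AA$ from $\BB$ and the \propmove\ loses for S. (One must also check that the starting position $P_0$ itself satisfies $r_0 < R(P_0)$ in the intended application; here $\G(\AA_0,\BB_0)$ is the complete directed graph on $I$, so a minimum-cost cover has cost related to $\min(n, 2(n-|\pi_m|))$, matching the upper bounds from $\varphi_1,\varphi_2$ — but strictly this calculation belongs to the corollary that follows the lemma, not the lemma itself.)

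For the inductive step I would handle the move types in turn. For a \lormove\ S picks $\AA_1 \cup \AA_2 = \AA$ and $r_1 + r_2 + 1 = r$; the key observation is that every edge of $\G(\AA,\BB)$ is an edge of $\G(\AA_1,\BB)$ or of $\G(\AA_2,\BB)$ (the witnessing $(\MM,w)$ landed in $\AA_1$ or $\AA_2$), so any cover of $\G(\AA_1,\BB)$ together with any cover of $\G(\AA_2,\BB)$ yields a cover of $\G(\AA,\BB)$, whence $R(P) \leq R(P_1) + R(P_2)$. Then $r_1 + r_2 = r - 1 < R(P) - 1 < R(P_1) + R(P_2)$ forces $r_1 < R(P_1)$ or $r_2 < R(P_2)$, and D picks that branch. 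The \landmove\ is symmetric once one notes that $\G(\AA,\BB_1)$ and $\G(\AA,\BB_2)$ together have all edges of $\G(\AA,\BB)$, so again $R(P) \leq R(P_1) + R(P_2)$.

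The substantial case is the \ddimove\ (and its dual the \bbomove). Here S picks $d$ with $d < r$, then for each $(\MM,w) \in \AA$ picks $d$ points, and for each $(\MM_{i\to j},v) \in \BB$ picks $n-d+1$ points, moving to $(r-d,\AA',\BB')$. The claim to prove is $R(P') \geq R(P) - d$, which combined with $r < R(P)$ gives $r - d < R(P) - d \leq R(P')$. The reason is a pigeonhole/counting argument on the single underlying model pair witnessing each edge: if $(i,j) \in E(\G(\AA',\BB'))$ is a new edge not reducible to an old one, then the propositional equivalence between a chosen point of $\MM$ (of type, say, $\pi_a$) and a chosen point of $\MM_{i\to j}$ forces some structural relationship; because $\MM_{i\to j}$ differs from $\MM$ only at one point (type changed from $\pi_i$ to $\pi_j$), for the $d$ chosen points of $\MM$ and the $n-d+1$ chosen points of $\MM_{i\to j}$ to realize the same multiset of types, S is essentially forced to "spend" at least $|\pi_i|_M$ or $|\pi_j|_M$ of his $d$ budget to avoid that type — so the edges of $\G(\AA',\BB')$ that are genuinely new beyond $\G(\AA,\BB)$ can all be covered by adding, to an optimal cover of $\G(\AA,\BB)$, vertices of total cost at most $d$. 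Making this precise — identifying exactly which $i^+$ or $j^-$ to add and bounding their cost by $d$ — is the main obstacle, and I expect it to require a careful case analysis distinguishing whether $d \geq |\pi_i|_M$, whether the chosen points include the one point where $\MM$ and $\MM_{i\to j}$ differ, and which type the matched points realize. The \bbomove\ case is dual, using that S must choose $n-d+1$ points on the $\AA$-side and $d$ on the $\BB$-side. Finally, in each non-terminal case one checks that S has not won (no game-ending condition was triggered: the \propmove\ is handled above, and the $r \leq d$ clause of the \ddimove\ cannot fire since $r < R(P)$ would be contradicted if $R(P) \leq d < r$ — wait, rather: if S picks $d \geq r$ the game ends with D winning immediately, which is fine, so we only analyze $d < r$).
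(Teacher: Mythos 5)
Your skeleton matches the paper's: maintain the invariant $r < R(P)$ through every move, with the \propmove, \lormove\ and \landmove\ cases handled exactly as the paper does (union of covers of the two successor graphs covers $\G(\AA,\BB)$, so $R(P)\leq R(P_1)+R(P_2)$ and $r_1+r_2=r-1$ forces one branch to keep the invariant). However, the \ddimove\ case, which you correctly identify as carrying the key inequality $R(P') \geq R(P)-d$, is left as an acknowledged gap, and the heuristic you offer for it points in the wrong direction. You speak of covering ``the edges of $\G(\AA',\BB')$ that are genuinely new beyond $\G(\AA,\BB)$'' by augmenting a cover of $\G(\AA,\BB)$; that would give $R(P')\leq R(P)+d$, which is useless here. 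What must be shown is the converse: the edges of $\G(\AA,\BB)$ that S manages to \emph{destroy} in passing to $\G(\AA',\BB')$ admit a cover of cost at most $d$, so that this cover together with an optimal cover of $\G(\AA',\BB')$ covers $\G(\AA,\BB)$, yielding $R(P)\leq R(P')+d$.

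The missing idea is a dichotomy on the set $A$ of points S selects in $\MM$. If $\MM$ has at least $d+1$ points whose types lie in $\mathrm{tp}_\MM(A)$, then every $\MM_{i\to j}$ (differing from $\MM$ at a single point) still has at least $d$ such points, so among the $n-d+1$ points S must keep on the $\BB$-side there is always one propositionally equivalent to a chosen point of $\MM$; hence \emph{no} edge is destroyed and $R(P')=R(P)$. Otherwise $\MM$ has exactly $d$ such points, which forces $A$ to be \emph{precisely} the set of all points of $\MM$ realizing the types in $\mathrm{tp}_\MM(A)$. In that case the only destroyable edges are those $(i,j)$ with $\pi_i\in\mathrm{tp}_\MM(A)$ and $\pi_j\notin\mathrm{tp}_\MM(A)$, and these are all covered by $C_A=\{i^+\mid \pi_i\in\mathrm{tp}_\MM(A)\}$, whose cost is the total number of points of $\MM$ with types in $\mathrm{tp}_\MM(A)$, i.e.\ exactly $|A|=d$. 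This is where the equality between the ``budget'' $d$ and the cover cost comes from --- not from a per-edge accounting of $|\pi_i|_M$ versus $|\pi_j|_M$ as you suggest. The \bbomove\ is handled dually with the cover $\{j^-\mid \pi_j\in\mathrm{tp}_\MM(W)\setminus\mathrm{tp}_\MM(A)\}$ of cost $d-1$. Without this argument the lemma is not proved, since the \ddimove\ is the only move that actually consumes resource proportional to the model size and is therefore the heart of the lower bound.
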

\begin{proof}
    We show that any move S makes either leads to D winning the game immediately or maintains the conditions of the claim given the correct choice by D.

    \textbf{\propmove}: Since $R(P) > 0$, there are propositionally equivalent pointed models on both sides of the game so clearly D wins if S makes any \propmove. 
        
        %
        

        \textbf{\lormove}: Let $\AA_1, \AA_2 \subseteq \AA$ and $r_1, r_2 \geq 1$ be the choices of S and let $P_1 = (r_1, \AA_1, \BB)$ and $P_2 = (r_2, \AA_2, \BB)$. For each edge $e = (i, j) \in E$, there are propositionally equivalent models $(\MM, w) \in \AA$ and $(\MM_{i \rightarrow j}, v) \in \BB$. Since $\AA_1 \cup \AA_2 = \AA$, every model $(\MM, w) \in \AA$ is in $\AA_1$ or $\AA_2$ so every edge of the graph $\G(\AA, \BB)$ is present in at least one of the graphs $\G(\AA_1, \BB)$ and $\G(\AA_2, \BB)$. We claim that $r_1 < R(P_1)$ or $r_2 < R(P_2)$. Assume for contradiction that $r_1 \geq R(P_1)$ and $r_2 \geq R(P_2)$. Then there is a cover $C_1$ of $\G(\AA_1, \BB)$ with $r(C_1) \leq r_1$ and the same for $P_2$. Now $C_1 \cup C_2$ is a cover of $\G(\AA, \BB)$. Additionally $r(C_1 \cup C_2) \leq r(C_1) + r(C_2) \leq r_1 + r_2 \leq r$. This means that $R(P) \leq r$, which is a contradiction with the condition $r < R(P)$. Thus D can choose a position that maintains the condition of the claim.

        \textbf{\landmove}: Very similar to the above case with the models in $\BB$ split between $\BB_1$ and $\BB_2$.

        \textbf{\ddimove}: 
        Let $d \in \Nset$ be the number chosen by S.
        For each $(\MM, w) \in \AA$, S chooses $d$ different points from the model $\MM$. Let $A$ be the set of all points chosen this way.
        For each $(\MM_{i \rightarrow j}, w) \in \BB$, let $B_{i \rightarrow j} $ be the set of $n-d+1$ points chosen by S. Let $\mathrm{tp}_{\MM}(X)$ be the set of types realized by a set $X$ of points in the model $\MM$. We consider the following two cases:
        \begin{enumerate}
            \item The model $\MM$ has at least $d+1$ points with types from $\mathrm{tp}_{\MM}(A)$. Let $e = (i, j) \in E$. The model $\MM_{i \rightarrow j}$ only differs from $\MM$ by the type of one point so $\MM_{i \rightarrow j}$ has at least $d$ points that realize types from $\mathrm{tp}_{\MM}(A)$. Since $|B_{i \rightarrow j}| = n-d+1$, there is at least one point in $B_{i \rightarrow j}$ with a type from $\mathrm{tp}_{\MM}(A)$. Thus there are propositionally equivalent $(\MM, w') \in \AA'$ and $(\MM_{i \rightarrow j}, v') \in \BB'$. Thus the edge $e = (i,j)$ is still present in the graph $\G(\AA', \BB')$ of the following position. This applies for every $e \in E$ so $R(P') = R(P)$.

            
            %
            
            \item The model $\MM$ has exactly $d$ points with types from $\mathrm{tp}_{\MM}(A)$. Now $A$ is the set of those $d$ points. We first consider edges $e = (i,j) \in E$ with $\pi_i \notin \mathrm{tp}_{\MM}(A)$ or $\pi_j \in \mathrm{tp}_{\MM}(A)$. For any edge of this kind, the model $\MM_{i \rightarrow j}$ has at least $d$ points with types from $\mathrm{tp}_{\MM}(A)$ so at least one of the $n-d+1$ points in $B_{i \rightarrow j}$ has a type from $\mathrm{tp}_{\MM}(A)$. As in case 1, this means that all these edges are still present in the graph $\G(\AA', \BB')$.

            Let us then consider the rest of the edges $e = (i,j) \in E$ with $\pi_i \in \mathrm{tp}_{\MM}(A)$ and $\pi_j \notin \mathrm{tp}_{\MM}(A)$. For an edge of this kind, the model $\MM_{i \rightarrow j}$ has only $d-1$ points with types from $\mathrm{tp}_{\MM}(A)$. Thus if S chooses the $n-d+1$ points of $B_{i \rightarrow j}$ to be exactly the points with types not in $\mathrm{tp}_{\MM}(A)$, then $\MM_{i \rightarrow j}$ has no propositionally equivalent counterpart on the other side and the edge $e$ is not present in the graph $\G(\AA', \BB')$.

            We then consider the condition of the claim in the position $P' = (r-d, \AA', \BB')$. By the above arguments, the only way S could remove edges when moving from $\G(\AA, \BB)$ to $\G(\AA', \BB')$, was to choose in each version of the model $\MM$ exactly all of the points that satisfy some set $\mathrm{tp}_{\MM}(A)$ of types. Any edge eliminated this way originates from an index $i$ of a type in $\mathrm{tp}_{\MM}(A)$. All of these edges can be covered via the cover $C_A = \{i^+ \mid \pi_i \in \mathrm{tp}_{\MM}(A)\}$. The cost of this cover is the total number of points of the model $\MM$ with types from $\mathrm{tp}_{\MM}(A)$. Since $A$ contains exactly all points with types from $\mathrm{tp}_{\MM}(A)$, we have $r(C_A) = |A| = d$. Let $C'$ be a cover of $\G(\AA', \BB')$ with minimal cost so $R(P') = r(C')$. Now $C' \cup C_A$ is a cover of $\G(\AA, \BB)$ with cost $R(P') + d$. Thus $r < R(P) \leq R(P') + d$ so $r-d < R(P')$ and the condition of the claim is maintained. 
        \end{enumerate}
        
        \textbf{\bbomove}: Similar to the \ddimove\ with $n-d+1$ points chosen from models in $\AA$ and $d$ points chosen from models in $\BB$. Full details in the Appendix.
\end{proof}

By the above Lemma, the formula size required to define a class $M$ of the equivalence $\equivalence$ comes down to calculating the minimum cost of a cover.

\begin{theorem}\label{thm:uml_formula_size_lower_bound}
Let $M$ be an equivalence class of the relation $\equivalence$ and let $\pi$ be the propositional type with most satisfying points in models in $M$. If the formula $\varphi \in \gmlu[\tau]$ defines the class $M$, then $\varphi$ has size at least $\min(n, 2(n-|\pi|))$.
\end{theorem}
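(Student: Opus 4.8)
The plan is to turn the theorem into a statement about the game $\fsc^\tau_r(\AA_0,\BB_0)$ via the game characterisation and Lemma~\ref{gmlulemma}, and then reduce everything to computing the minimum cost of a cover of an explicit directed graph. First I would invoke the $\gmlu[\tau]$-analogue of Theorem~\ref{pelitoimii} stated just after the definition of the $\fsc$-game. Suppose $\varphi \in \gmlu[\tau]$ defines $M$ with $\size(\varphi) = r$. Since $\MM \in M$ we have $\AA_0 \vDash \varphi$; and since each $\MM_{i \rightarrow j}$ realises the $1$-types with a different multiplicity vector than $\MM$ does (it has one fewer point of type $\pi_i$ and one more of type $\pi_j$, with $i \neq j$), none of these models lies in $M$, so $\BB_0 \vDash \neg\varphi$. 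Hence S has a winning strategy in $\fsc^\tau_r(\AA_0,\BB_0)$, and the contrapositive of Lemma~\ref{gmlulemma} applied to the starting position $P_0 = (r,\AA_0,\BB_0)$ yields $r \geq R(P_0)$, the minimum cost of a cover of $\G(\AA_0,\BB_0)$. Thus it suffices to prove $R(P_0) = \min(n, 2(n - |\pi_m|))$, where $\pi_m$ is the type $\pi$ of the statement.

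Second I would identify $\G(\AA_0,\BB_0) = (V,E)$. By definition $V = I$. The key observation is that the distinguished point $1$ is never the point whose type gets altered in any $\MM_{i \rightarrow j}$: that altered point is the \emph{largest} point of $\MM$ carrying the realised type $\pi_i$, so it can equal $1$ only if $1$ were the largest point of its own type — but by the standing assumption points $1$ and $2$ of $\MM$ are propositionally equivalent, so the largest point of the type of $1$ is at least $2$. Consequently $\tp_{\MM_{i \rightarrow j}}[1] = \tp_{\MM}[1]$ for all distinct $i,j \in I$, so $(\MM,1) \in \AA_0$ and $(\MM_{i \rightarrow j},1) \in \BB_0$ are propositionally equivalent, and therefore $(i,j) \in E$ for every ordered pair of distinct $i,j \in I$. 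In other words $\G(\AA_0,\BB_0)$ is the complete loopless directed graph on the vertex set $I$.

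Third I would carry out the cover minimisation. Encode a cover $C$ by $P := \{i \mid i^{+} \in C\}$ and $N := \{i \mid i^{-} \in C\}$; the covering condition for the complete digraph says precisely that there is no pair of \emph{distinct} indices $i \in I \setminus P$, $j \in I \setminus N$. A short case distinction then shows the cheapest solutions are exactly: (i) $P = I$, $N = \emptyset$ (or symmetrically $P = \emptyset$, $N = I$), of cost $\sum_{i \in I} |\pi_i| = n$; and (ii) $I \setminus P = I \setminus N = \{a\}$ for a single index $a \in I$, of cost $2\sum_{i \in I,\, i \neq a} |\pi_i| = 2(n - |\pi_a|)$, which is minimised at $a = m$. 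Hence $R(P_0) = \min(n, 2(n - |\pi_m|))$ (the trivial case $|I| = 1$, where $\G$ has no edge and both expressions equal $0$, fits this as well), completing the proof.

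The substantive work of the section is Lemma~\ref{gmlulemma} itself, which we may assume; within the present proof the only step needing care is the identification of the edge set of $\G(\AA_0,\BB_0)$ — in particular checking, via the ``points $1$ and $2$ are propositionally equivalent'' assumption, that every $\MM_{i \rightarrow j}$ keeps point $1$ on its original type, so that all the relevant edges are genuinely present. The final minimisation over covers is elementary.
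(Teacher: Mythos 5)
Your proposal is correct and follows essentially the same route as the paper's proof: reduce via the game characterization and Lemma~\ref{gmlulemma} to showing that the minimum cover cost of $\G(\AA_0,\BB_0)$ equals $\min(n,2(n-|\pi|))$, observe that this graph is the complete irreflexive digraph on $I$, and carry out the elementary cover minimisation. Your explicit verification that point $1$ is never the altered point (using the assumption that points $1$ and $2$ are propositionally equivalent) spells out a detail the paper leaves implicit, and your phrasing of the conclusion as $r \geq R(P_0)$ is a clean way to state what the paper expresses via D's winning strategy.
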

\begin{proof}
Let $s = \min(n, 2(n-|\pi|))$. We use the above Lemma to show that D has a winning strategy in the game $\fsc^\tau_s(\AA_0, \BB_0)$, thus proving the claim.

It suffices to show that the minimum cost of a cover of $\G(\AA_0, \BB_0) = (V, E)$ is equal to $s$. First we see that $\G(\AA_0, \BB_0)$ is a complete irreflexive directed graph. We begin by noting that $C^+ := \{i^+ \mid i \in I\}$ is a cover with cost $n$ and adding any $i^-$ or replacing $i^+$ with $i^-$ does not reduce the cost. Thus if all indices are used, $C^+$ is a minimum cost cover. Next, we consider covers $C_i$, where there is an index $i \in I$ with $\{i^+, i^-\} \cap C_i = \emptyset$. Note that $i$ is the only such index. Indeed, if there were a second such index $j$, then the edge $(i, j)$ would not be covered. Now, for any $j \in I$, $j \neq i$ we have $j^+ \in C_i$ since it is the only way to cover the edge $(j, i)$. In the same way $j^- \in C_i$ since the edge $(i, j)$ must be covered. Thus $C_i = \{j^+, j^- \mid j \in I, j \neq i\}$. The cost of $C_i$ is
\[
r(C_i) = \sum\limits_{j^+ \in C_i} |\pi_j| + \sum\limits_{j^- \in C_i} |\pi_j| = n - |\pi_i| + n - |\pi_i| = 2(n - |\pi_i|).
\]
The cost minimal cover of this type is clearly the one where $i$ is the index of the type with the most satisfying points. Thus the minimal cover size is $\min(n, 2(n-|\pi|))$.
\end{proof}

\begin{theorem}\label{thm:expected_formula_size}
    $\langle C \rangle \asymptotic n$.
\end{theorem}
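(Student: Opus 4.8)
The plan is to combine the upper bounds from the two formulas $\varphi_1, \varphi_2$ with the matching lower bound of Theorem \ref{thm:uml_formula_size_lower_bound}, and then average over the distribution $p_\equivalence$ using the law of large numbers exactly as in Subsection \ref{entropy}. First I would record that for every equivalence class $M = [n_1,\dots,n_\ell]$ we have the two-sided estimate
\[
\min\bigl(n,\, 2(n-|\pi_m|)\bigr) \;\leq\; C_{\gmlu}(M) \;\leq\; \min\bigl(n,\, 2(n-|\pi_m|)\bigr) + \mathcal{O}_{|\tau|}(1),
\]
where $|\pi_m|$ is the size of the largest type in $M$: the lower bound is Theorem \ref{thm:uml_formula_size_lower_bound}, and the upper bound is witnessed by whichever of $\varphi_1$ (size $n + \mathcal{O}_{|\tau|}(1)$) or $\varphi_2$ (size $2(n-|\pi_m|) + \mathcal{O}_{|\tau|}(1)$) is smaller. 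Since $|\tau|$ is fixed, the additive $\mathcal{O}_{|\tau|}(1)$ term contributes only $\mathcal{O}_{|\tau|}(1)$ to $\langle C\rangle$ and is thus negligible against $n$; so it suffices to show that the expectation of $g(M) := \min(n, 2(n-|\pi_m|))$ with respect to $p_\equivalence$ is asymptotically $n$.

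For the upper bound $\langle g \rangle \leq n$ this is immediate since $g(M) \leq n$ pointwise, hence $\langle C \rangle \leq n + \mathcal{O}_{|\tau|}(1)$. The real content is the lower bound: I must show that, with probability tending to $1$ under $p_\equivalence$, a random model lies in a class $M$ for which $2(n - |\pi_m|) \geq n$, i.e. $|\pi_m| \leq n/2$. Here I reuse the machinery of Subsection \ref{entropy}: picking a uniformly random model of size $n$ is the same as throwing $n$ balls independently into $\ell = 2^{|\tau|}$ bins, and by the weak law of large numbers (Proposition \ref{proposition:law_of_large_numbers}) together with the union bound, for any $\delta > 0$ the probability that every bin count $n_i$ satisfies $|n_i/n - 2^{-|\tau|}| < \delta$ tends to $1$. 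Fixing $\delta$ small enough that $2^{-|\tau|} + \delta < 1/2$ (possible whenever $|\tau| \geq 1$, so $2^{-|\tau|} \leq 1/2$; and $2^{-|\tau|} < 1/2$ strictly when $|\tau| \geq 2$, with the case $|\tau| = 1$ handled separately or by noting the statement is about asymptotics in $n$), we get that with probability $\to 1$ the largest type satisfies $|\pi_m| = \max_i n_i < (2^{-|\tau|}+\delta)n < n/2$, so $g(M) = n$ on this event.

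Putting the pieces together: let $\varepsilon > 0$. Choose $\delta$ as above and $n$ large enough that $p_\equivalence(\{M : g(M) = n\}) > 1 - \varepsilon$. Then
\[
\langle g \rangle \;\geq\; \sum_{M:\, g(M) = n} p_\equivalence(M)\, n \;>\; (1-\varepsilon)\, n,
\]
and combined with $\langle g \rangle \leq n$ this gives $\langle g \rangle \asymptotic n$, hence $\langle C \rangle \asymptotic n$. I expect the only delicate point to be the bookkeeping around the additive constant $\mathcal{O}_{|\tau|}(1)$ and the degenerate small-vocabulary cases; the probabilistic core is essentially identical to the argument already carried out for $\langle H_B \rangle$, so it can be cited rather than repeated. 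Finally, I would remark that Theorems \ref{thm:expected_boltzmann} and \ref{thm:expected_formula_size} together yield $\langle H_B \rangle \asymptotic |\tau| n \asymptotic |\tau| \langle C \rangle$, which is Equation (\ref{equationoneone}).
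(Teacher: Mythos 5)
Your proposal is correct and follows essentially the same route as the paper: the upper bound $\langle C\rangle \leq n + \mathcal{O}_{|\tau|}(1)$ comes from $\varphi_1$, and the lower bound combines Theorem \ref{thm:uml_formula_size_lower_bound} with the concentration estimate for $I_n^\delta$ already established in Subsection \ref{entropy} to conclude that almost all classes have complexity at least $n$ (up to the $\delta$-slack). Your explicit attention to the $|\tau|=1$ case, where $2^{-|\tau|}=1/2$ and one must fall back on the bound $2(n-|\pi_m|)\geq(1-2\delta)n$, is in fact slightly more careful than the paper's own wording.
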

\begin{proof}
    Since $C(M) \leq n + \mathcal{O}_{|\tau|}(1)$ for any equivalence class $M$, we have $\langle C \rangle \leq n + \mathcal{O}_{|\tau|}(1)$. For the lower bound, recall from the previous section that for any $\delta > 0$ and $n$ sufficiently large we have that
    \[\sum_{(n_1 , \dots , n_\ell) \in I_n^\delta}p_\equivalence([n_1,\dots,n_\ell]) > (1 - \delta).\]
    Observe that if $(n_1, \dots, n_\ell) \in I_n^\delta$, for $\delta$ sufficiently small, then Theorem \ref{thm:uml_formula_size_lower_bound} entails that 
    $C([n_1,\dots,n_\ell]) \geq n$
    as every $1$-type is realized less than $n/2$-times. Thus, for any $\delta > 0$ and $n$ sufficiently large, we have $\langle C \rangle \geq (1 - \delta)n$. Using these bounds it is easy to show  $\langle C \rangle \asymptotic n$.
\end{proof}

The desired relation between Boltzmann entropy and description complexity now follows directly from Theorems \ref{thm:expected_boltzmann} and \ref{thm:expected_formula_size}.

\begin{corollary}\label{thm:entropy_and_complexity}
    $\langle H_B \rangle \asymptotic |\tau|\langle C \rangle$
\end{corollary}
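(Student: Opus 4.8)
The plan is to obtain the statement by simply chaining the two asymptotic estimates already established in this section. Theorem~\ref{thm:expected_boltzmann} gives $\langle H_B \rangle \asymptotic |\tau| n$, and Theorem~\ref{thm:expected_formula_size} gives $\langle C \rangle \asymptotic n$. Since $|\tau|$ is a constant that does not depend on $n$, and the relation $f \asymptotic g$ (i.e.\ $\lim_{n\to\infty} f/g = 1$) is preserved under multiplying both sides by the same nonzero constant, the second estimate yields $|\tau|\langle C \rangle \asymptotic |\tau| n$.

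It then remains to combine $\langle H_B \rangle \asymptotic |\tau| n$ with $|\tau| n \asymptotic |\tau|\langle C \rangle$ and invoke transitivity of $\asymptotic$: a product of two ratios each tending to $1$ tends to $1$, so $\langle H_B \rangle / (|\tau|\langle C \rangle) \to 1$. This gives $\langle H_B \rangle \asymptotic |\tau|\langle C \rangle$, as desired.

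The main obstacle is that there is essentially no obstacle at this final step — all the substantive work lies in the two theorems being combined, and in particular in Theorem~\ref{thm:expected_formula_size}. The harder half of that theorem, the lower bound $\langle C \rangle \geq (1 - \delta) n$, rests on the formula size game analysis for $\gmlu[\tau]$ (Lemma~\ref{gmlulemma} and Theorem~\ref{thm:uml_formula_size_lower_bound}) together with the concentration statement derived from the weak law of large numbers, which ensures that almost all $\tau$-models of size $n$ are ``balanced'', i.e.\ lie in $I_n^\delta$ and hence realize every $1$-type fewer than $n/2$ times, so that $\min(n, 2(n - |\pi|)) = n$ for such classes. Given these ingredients, the corollary is immediate.
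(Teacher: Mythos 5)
Your argument is correct and is exactly the paper's: the corollary is obtained by combining Theorem~\ref{thm:expected_boltzmann} ($\langle H_B \rangle \asymptotic |\tau|n$) with Theorem~\ref{thm:expected_formula_size} ($\langle C \rangle \asymptotic n$), using that $\asymptotic$ is preserved under multiplication by the constant $|\tau|$ and is transitive. Your accompanying remarks about where the real work lies are accurate as well.
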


\section{FO: Expected description complexity for polyadic\\ vocabularies}

We saw in the previous section that the ratio of expected Boltzmann entropy of an isomorphism class and its $\gmlu$-description complexity is asymptotically the size of the underlying fixed vocabulary. Given that the main characteristic of $\gmlu$ is that it can characterize finite monadic structures up to isomorphism, one might guess that a similar behaviour would extend to $\mathrm{FO}$, which can characterize arbitrary finite structures up to isomorphism. The purpose of this section is to show that surprisingly this is not the case: the expected description complexity grows faster than the expected Boltzmann entropy.

Given a relation symbol $R$ we will use $ar(R)$ to denote its arity. Fix a finite relational vocabulary $\tau$ and let $m := \max \{ar(R) \mid R \in \tau\}$. For the rest of this section we will assume that $m \geq 2$. The following result, which fails for unary vocabularies, is established in \cite{Fagin77}.

\begin{proposition}
    The number of non-isomorphic $\tau$-models of size $n$ is asymptotically $2^{p(n)}/n!$,
    where $p(n) = \sum_{R \in \tau} n^{ar(R)}$.
\end{proposition}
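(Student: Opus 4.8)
The plan is to use the orbit-counting (Burnside) lemma to relate the number of isomorphism classes to the number of \emph{labelled} structures, and then to show that almost all labelled structures are rigid. Write $N(n) := 2^{p(n)}$; this is exactly the number of $\tau$-structures on the fixed domain $\{1,\dots,n\}$, since each $R \in \tau$ contributes $n^{ar(R)}$ independent binary choices. The group $S_n$ acts on these $N(n)$ structures by renaming elements, the orbits are precisely the isomorphism classes, and by Burnside's lemma their number $I(n)$ equals $\frac{1}{n!}\sum_{\sigma \in S_n} F(\sigma)$, where $F(\sigma)$ is the number of labelled structures fixed by $\sigma$. A structure is fixed by $\sigma$ iff each relation $R$ is a union of $\langle\sigma\rangle$-orbits on $\{1,\dots,n\}^{ar(R)}$, so $F(\sigma) = 2^{\sum_{R \in \tau} o_R(\sigma)}$, where $o_R(\sigma)$ is the number of such orbits. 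The identity contributes $F(\mathrm{id}) = N(n)$, so it suffices to prove $\sum_{\sigma \neq \mathrm{id}} F(\sigma) = o(N(n))$; then $I(n) = \frac{N(n)}{n!}\big(1 + o(1)\big) \asymptotic 2^{p(n)}/n!$.

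For the key estimate, fix $\sigma \neq \mathrm{id}$ and let $k := |\mathrm{supp}(\sigma)| \geq 2$; pick $R_0 \in \tau$ with $ar(R_0) = m \geq 2$. A tuple in $\{1,\dots,n\}^m$ is fixed by $\sigma$ iff all its entries lie in the $(n-k)$-element set $\mathrm{Fix}(\sigma)$, of which there are $(n-k)^m$, and every remaining tuple lies in a $\langle\sigma\rangle$-orbit of size at least $2$. Hence $o_{R_0}(\sigma) \leq (n-k)^m + \tfrac12\big(n^m - (n-k)^m\big) = n^m - \tfrac12\big(n^m - (n-k)^m\big)$. Since $m \geq 2$ and $0 \leq n-k \leq n$, one checks $n^m - (n-k)^m \geq n^2 - (n-k)^2 = k(2n-k) \geq kn$, so $o_{R_0}(\sigma) \leq n^m - \tfrac{kn}{2}$. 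Using $o_R(\sigma) \leq n^{ar(R)}$ for the remaining relations, this gives $F(\sigma) \leq 2^{\,p(n) - kn/2}$.

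Finally I would sum this estimate by grouping permutations according to the size of their support: the number of $\sigma$ with $|\mathrm{supp}(\sigma)| = k$ is at most $\binom{n}{k}k! \leq n^k$, so
\[
\sum_{\sigma \neq \mathrm{id}} F(\sigma) \;\leq\; 2^{p(n)}\sum_{k=2}^{n} n^k 2^{-kn/2} \;=\; 2^{p(n)}\sum_{k=2}^{n}\big(n\,2^{-n/2}\big)^k .
\]
For $n$ large, $n\,2^{-n/2} \leq \tfrac12$, so the tail is at most $2\big(n\,2^{-n/2}\big)^2 = 2n^2 2^{-n} \to 0$, which yields $\sum_{\sigma \neq \mathrm{id}} F(\sigma) = o(2^{p(n)})$, as required. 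The only genuinely delicate point is the orbit-count inequality of the second paragraph, together with checking that the gain $2^{-kn/2}$ is strong enough to beat the roughly $n! \approx 2^{n\log n}$ permutations being summed over; this is exactly where the hypothesis $m \geq 2$ is indispensable, since for a purely unary vocabulary $o_R(\sigma)$ can be as large as $n-1$ and no such exponential gain is available (which is why the statement fails there).
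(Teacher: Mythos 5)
Your argument is correct, and it is worth noting that the paper itself offers no proof of this proposition at all: it simply cites Fagin's 1977 paper on the number of finite relational structures. So you have supplied a genuine proof where the paper supplies a reference. Your route --- Burnside's lemma plus the estimate that every non-identity permutation with support of size $k$ fixes at most $2^{p(n)-kn/2}$ labelled structures, summed against the crude bound $n^k$ on the number of such permutations --- is essentially the classical argument (and close in spirit to Fagin's own, which proceeds via rigidity of almost all structures; your computation shows directly that the non-identity terms in the Burnside sum are negligible, which is a slightly cleaner packaging of the same fact). I checked the delicate steps: the count $(n-k)^m$ of $\sigma$-fixed $m$-tuples, the orbit bound $o_{R_0}(\sigma) \leq n^m - \tfrac12\bigl(n^m-(n-k)^m\bigr)$, the inequality $n^m-(n-k)^m \geq n^2-(n-k)^2 \geq kn$ for $m\geq 2$, and the convergence of the geometric tail $\sum_{k\geq 2}(n2^{-n/2})^k$ --- all are sound, and your closing remark correctly identifies why the exponential gain per moved point disappears for unary vocabularies (a transposition then still fixes $2^{p(n)-|\tau|}$ structures, so the non-identity terms dominate and the asymptotic fails, consistent with the paper's parenthetical warning). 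The only cosmetic quibble is that you should say explicitly that $k\leq n$ is used in $2n-k\geq n$; otherwise the proof is complete as written.
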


In \cite{PikhurkoV09} the authors mention (without a proof) that with high probability, defining a single graph of size $n$ up to isomorphism in $\mathrm{FO}$ requires a sentence of size $\Omega\big(\frac{n^2}{\log(n)}\big)$. Here we prove a version of this statement for an arbitrary (but finite) relational vocabulary. For the proof, recall that $\equiv_{\mathrm{FO}[\tau]}$ is over $\mathrm{Mod}_n(\tau)$.

\begin{theorem}\label{thm:average_case_lower_bound_fo}
    With high probability we have that $C_{\mathrm{FO}[\tau]}(M) = \Omega\big(\frac{n^m}{\log(n)}\big)$, when the isomorphism class $M$ is selected uniformly at random.
\end{theorem}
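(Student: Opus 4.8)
The plan is to use a counting argument in the style of the classical Kolmogorov complexity lower bound: there are few short formulas, many non-isomorphic models, so most models cannot have a short defining sentence. First I would fix the vocabulary $\tau$ with maximum arity $m \geq 2$ and count the number of $\mathrm{FO}[\tau]$-sentences of size at most $s$. Over a fixed finite vocabulary and using a fixed countable supply of variables, a crude bound suffices: a formula of size $s$ is a parse tree with $s$ leaves, each node labelled from a finite set of connectives, quantifiers, and atomic formulas; the only subtlety is that atomic formulas $R(\overline{x})$ and quantifiers $\exists x$ refer to variables, and a size-$s$ formula can mention at most $s$ distinct variables, so each such label has at most $\mathcal{O}(s^m)$ choices (for atoms) and $\mathcal{O}(s)$ choices (for quantified variables). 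This yields a bound of the form $s^{\mathcal{O}(s)} = 2^{\mathcal{O}(s \log s)}$ on the number of sentences of size at most $s$, hence at most that many $\equiv_{\mathrm{FO}[\tau]}$-definable (equivalently, isomorphism) classes of $\mathrm{Mod}_n(\tau)$ admit a defining sentence of size $\leq s$.

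Next I would bring in the counting result for non-isomorphic models: by the cited Proposition (Fagin), the number of non-isomorphic $\tau$-models of size $n$ is asymptotically $2^{p(n)}/n!$ with $p(n) = \sum_{R \in \tau} n^{\mathrm{ar}(R)}$, and since $m \geq 2$ we have $p(n) = \Theta(n^m)$, while $\log(n!) = \Theta(n \log n)$ by Stirling (Proposition~\ref{prop:stirling_approximation}); thus the number of isomorphism classes is $2^{\Theta(n^m)}$. Now set $s = c\,\frac{n^m}{\log n}$ for a small enough constant $c > 0$. The number of isomorphism classes definable by a sentence of size $\leq s$ is at most $2^{\mathcal{O}(s \log s)} = 2^{\mathcal{O}(c\, n^m)}$, and for $c$ sufficiently small this is a vanishing fraction of the total $2^{\Theta(n^m)}$ isomorphism classes. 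Since the class $M$ is drawn uniformly at random from the isomorphism classes (equivalently, $\mathrm{Mod}_n(\tau)$ is partitioned by $\equiv_{\mathrm{FO}[\tau]}$ and we pick a class), the probability that $C_{\mathrm{FO}[\tau]}(M) \leq s$ tends to $0$, i.e.\ with high probability $C_{\mathrm{FO}[\tau]}(M) > s = \Omega\big(\frac{n^m}{\log n}\big)$, which is the claim.

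One point that needs a little care is the exact meaning of ``uniformly at random'' and ``with high probability'' here: the theorem selects $M$ uniformly among isomorphism classes rather than selecting a model uniformly from $\mathrm{Mod}_n(\tau)$, so the two counting quantities that must be compared are (number of short-definable isomorphism classes) versus (total number of isomorphism classes), and both are governed by $2^{\Theta(n^m)}$ versus $2^{o(n^m)}$ once $c$ is small; no probabilistic concentration is needed beyond this pigeonhole-style comparison. I would also double-check that the size measure in the paper (all literals have size $1$, quantifiers add $1$) does not let a size-$s$ formula encode more than $2^{\mathcal{O}(s \log s)}$ distinct sentences — the only place this could go wrong is the number of variables, but a size-$s$ formula has at most $s$ atomic-subformula occurrences and hence at most $\mathcal{O}(s)$ variables in play, keeping everything polynomial per node.

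The main obstacle, and the step I expect to spend the most effort on, is making the count of size-$\leq s$ sentences rigorous and tight enough: specifically, handling variable renaming (we should count sentences up to $\alpha$-equivalence, or equivalently fix a canonical naming scheme) so that the bound is genuinely $2^{\mathcal{O}(s \log s)}$ and not something larger, and then verifying that the $\log s$ factor — which becomes $\log(n^m/\log n) = \Theta(\log n)$ — exactly cancels against the $\log n$ in the denominator of $s$, so that $s \log s = \Theta(n^m)$ and the constant $c$ can indeed be chosen small enough to beat the $\Theta(n^m)$ in the exponent of the model count. This cancellation is the reason the bound is $\frac{n^m}{\log n}$ rather than $n^m$, and getting the constants to line up is the crux.
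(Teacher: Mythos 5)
Your proposal is correct and follows essentially the same route as the paper's proof: bound the number of sentences of size at most $s$ by $2^{\mathcal{O}(s\log n)}$ (the paper encodes each size-$s$ sentence as a labelled tree over the $N_\tau = \sum_{R\in\tau} n^{ar(R)}$ atoms, giving $2^{\mathcal{O}(s\log(N_\tau))}$), compare with the $2^{p(n)}/n! \geq 2^{(1-o(1))n^m}$ isomorphism classes, and choose $s = c\,n^m/\log n$ with $c$ small so the $\log s = \Theta(\log n)$ factor cancels. The point you flag as the crux --- controlling the number of variables so the per-node label count stays polynomial --- is handled in the paper by simply restricting to variables from $\{x_1,\dots,x_n\}$, and your observation that a size-$s$ sentence involves only $\mathcal{O}(s)$ variables up to renaming covers the same ground.
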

\begin{proof}
    The proof is a counting argument: we will show that the ratio between ``short'' formulas and isomorphism classes of models of size $n$ approaches $0$ as $n$ increases. Fix $s \geq 2$. We will start by bounding the number of $\mathrm{FO}[\tau]$-sentences of size $s$ in which only variables from the set $\{x_1,\dots,x_n\}$ occur. Note that the number of atomic $\tau$-formulas over $\{x_1,\dots,x_n\}$ is $\sum_{R \in \tau} n^{ar(R)} =: N_\tau$. Each $\mathrm{FO}[\tau]$-sentence of size $s$ can be viewed as a labeled tree with $s$ nodes, the labels being literals and symbols from the set $\{\land,\vee,\exists,\forall\}$. Since a tree with $s$ nodes has $s - 1$ edges, each $\mathrm{FO}[\tau]$-sentence of size $s$ can be encoded using, say, $10(s - 1)\log(N_\tau + 4)$ bits. Thus there are at most $2^{10(s-1)\log(N_r + 4)}$ $\mathrm{FO}[\tau]$-sentences of size $s$.

    Using this bound we can also easily bound the number of $\mathrm{FO}[\tau]$-sentences of size \emph{at most} $s$ (and at least two). Indeed, the number of such sentences is at most $\sum_{i = 2}^s 2^{10(i-1)\log(N_\tau + 4)}$, which is bounded from above by $2^{10s \log(N_\tau + 4)}$.

    Now the number of non-isomorphic $\tau$-models of size $n$ is asymptotically 

    \medskip
    
    $2^{p(n)}/n! \geq 2^{n^m}/n! \geq 2^{n^m - n\log(n)} = 2^{\big(1 - \frac{\log(n)}{n^{m-1}}\big)n^m}$
    
    \medskip
    
    \noindent
    Note that $\log(N_\tau + 4) \leq d \log(n)$, for some $d > 0$ and $n$ sufficiently large. Thus, if we set $s = cn^m/\log(n)$, for some $c > 0$ that will be specified later, then we have that
    
    \smallskip 
    
    $2^{\big(10\frac{cn^m}{\log(n)}\big)\log(N_\tau + 4)} \leq 2^{\big(10\frac{cn^m}{\log(n)}\big) d \log(n)} = 2^{10cdn^m}$
    
    \smallskip
    
    \noindent
    for sufficiently large $n$. Combining these two estimates we have that
    \[\frac{2^{\big(10\frac{cn^m}{\log(n)}\big)\log(N_\tau + 4)}}{2^{p(n)}/n!} \leq \bigg(2^{10cd + \frac{\log(n)}{n^{m-1}} - 1} \bigg)^{n^m}\]
    Since $\log(n)/n^{m-1} \to 0$, by taking $c$ sufficiently small and $n$ sufficiently large we have that $2^{10cd + \frac{\log(n)}{n^{m-1}} - 1} < 1$. Thus with high probability we have that $C_{\mathrm{FO}[\tau]}(M) = \Omega\big(\frac{n^m}{\log(n)}\big)$.
\end{proof}

\begin{remark}
    Since for every isomorphism class $M$ we have that $C_{\mathrm{FO}[\tau]}(M) = \mathcal{O}(n^m)$, there is a small gap between this upper bound and the lower bound established in Theorem \ref{thm:average_case_lower_bound_fo}. Even in the case of graphs it seems an open problem to determine the average case $\mathrm{FO}$-description complexity of an isomorphism class, see \cite{PikhurkoV09} for more discussion.
\end{remark}

Consider now the partition $\equivalence_{\mathrm{FO}[\tau]}$ of $\mathrm{Mod}_n(\tau)$. In Appendix \ref{appendix:boltzmann_entropy_vs_description_complexity} we use Theorem \ref{thm:average_case_lower_bound_fo} to establish the following result.

\begin{proposition}\label{prop:boltzmann_entropy_versus_description_complexity}
    Expected description complexity of $\equivalence_{\mathrm{FO}[\tau]}$ grows asymptotically faster than its expected Boltzmann entropy.
\end{proposition}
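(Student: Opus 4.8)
The plan is to compare two quantities over $\mathrm{Mod}_n(\tau)$: the expected Boltzmann entropy $\langle H_B\rangle$ and the expected description complexity $\langle C\rangle$ of the partition $\equivalence_{\mathrm{FO}[\tau]}$. First I would pin down the expected Boltzmann entropy. By Proposition \ref{prop:shannon_boltzmann}, $\langle H_B\rangle = \log(|\mathrm{Mod}_n(\tau)|) - H_S(\equivalence_{\mathrm{FO}[\tau]})$, and since $H_S \geq 0$ we immediately get $\langle H_B\rangle \leq \log(|\mathrm{Mod}_n(\tau)|)$. The number of labelled $\tau$-structures on domain $\{1,\dots,n\}$ is exactly $2^{p(n)}$ with $p(n) = \sum_{R\in\tau} n^{ar(R)} = \Theta(n^m)$, so $\langle H_B\rangle = \mathcal{O}(n^m)$. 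In fact one can get a matching lower bound: the isomorphism classes are almost all of size $n!$ (by Fagin's asymptotic count $2^{p(n)}/n!$ non-isomorphic structures, so a typical class has Boltzmann entropy $\log(n!) = \Theta(n\log n)$), hence $\langle H_B\rangle = \Theta(n\log n)$ — but for the proposition it suffices to have the upper bound $\langle H_B\rangle = \mathcal{O}(n^m \big/ \text{something})$; more precisely I will argue $\langle H_B\rangle = \mathcal{O}(n^{m}/\log n)$ is false in general, so the cleaner route is to show $\langle H_B\rangle = \Theta(n\log n)$, which is certainly $o(n^m/\log n)$ since $m\geq 2$.

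Next I would handle the expected description complexity. Theorem \ref{thm:average_case_lower_bound_fo} says that with high probability a uniformly random isomorphism class $M$ has $C_{\mathrm{FO}[\tau]}(M) = \Omega(n^m/\log n)$. Since description complexity is non-negative, restricting the expectation to this high-probability event already gives $\langle C\rangle \geq (1-o(1))\cdot \Omega(n^m/\log n) = \Omega(n^m/\log n)$. Here I must be slightly careful about which distribution the "high probability" in Theorem \ref{thm:average_case_lower_bound_fo} refers to: it is the uniform distribution on isomorphism classes, whereas $\langle C\rangle$ is taken with respect to $p_{\equivalence}$, which weights a class by its size $|M|/|\mathrm{Mod}_n(\tau)|$. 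But since almost all classes have the same size $n!$ (Fagin), the two distributions are asymptotically close in total variation, so the high-probability lower bound transfers. Concretely, I would show that the $p_{\equivalence}$-mass of the "bad" classes (those of size $\neq n!$, or those with small description complexity) is $o(1)$, using Fagin's count and a union bound exactly as in the proof of Theorem \ref{thm:average_case_lower_bound_fo}.

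Finally I would combine: $\langle C\rangle = \Omega(n^m/\log n)$ while $\langle H_B\rangle = \mathcal{O}(n\log n)$, and since $m\geq 2$ we have $n\log n = o(n^m/\log n)$, so $\langle H_B\rangle = o(\langle C\rangle)$, i.e. the expected description complexity grows asymptotically strictly faster than the expected Boltzmann entropy. The main obstacle I anticipate is the bookkeeping around the two distributions — making rigorous that the uniform-on-classes high-probability statement of Theorem \ref{thm:average_case_lower_bound_fo} survives reweighting by class size — together with getting a clean enough upper bound on $\langle H_B\rangle$; the latter is really just Proposition \ref{prop:shannon_boltzmann} plus Fagin's theorem and Stirling (Proposition \ref{prop:stirling_approximation}), so the distributional argument is where the actual care is needed.
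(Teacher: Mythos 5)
Your proposal is correct and follows essentially the same route as the paper: bound $\langle H_B\rangle$ above by $\log(n!) = \mathcal{O}(n\log n)$ (for which one only needs $|M|\leq n!$ for every class, not the full ``almost all classes have size $n!$'' fact), and lower-bound $\langle C\rangle$ by $\Omega(n^m/\log n)$ by combining Theorem \ref{thm:average_case_lower_bound_fo} with Fagin's rigidity result to transfer the high-probability statement from the uniform distribution on classes to $p_\equivalence$. The distributional subtlety you flag is exactly the point the paper's appendix proof addresses, and your resolution of it is the same as theirs.
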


Note that Proposition \ref{prop:boltzmann_entropy_versus_description_complexity} does not follow immediately from Theorem \ref{thm:average_case_lower_bound_fo}, since there we consider the uniform distribution over the isomorphism classes, while here we need to consider $p_{\equivalence_{\mathrm{FO}[\tau]}}$ which a priori could place negligible probabilities on isomorphism classes with high description complexity. However, it follows from the results of \cite{Fagin77} that for large $n$ the distribution $p_{\equivalence_{\mathrm{FO}[\tau]}}$ is quite close to the uniform distribution.

\medskip

\medskip

\medskip

\noindent
\textbf{Acknowledgments.}\ \ \ 
Antti Kuusisto and Miikka Vilander were supported by the Academy of Finland project \emph{Explaining AI via Logic} (XAILOG), grant number 345612 (Kuusisto). Antti Kuusisto was also supported by the Academy of Finland project \emph{Theory of computational logics}, grant numbers 324435, 328987 (to December 2021); 352419, 352420 (January to August 2022), 352419, 353027 (from September 2022).

\bibliographystyle{plainurl}
\bibliography{arxivstacs}

\section{Appendix}

\subsection{Proof of Proposition \ref{prop:shannon_boltzmann}}\label{appendix:shannon_boltzmann}

Letting $\{M_i \mid i \in I\}$ enumerate the equivalence classes of $\equivalence$, we have the following chain of identities.
    \begin{align*}
        &H_S(\equivalence) + \langle H_B \rangle \\
        = &- \sum\limits_{i \in I} p_\equivalence (M_i) \log p_\equivalence(M_i) + \sum\limits_{i \in I} p_\equivalence (M_i) \log(|[M_i]_\equivalence|) \\
        = &- \sum\limits_{i \in I} p_\equivalence (M_i) \log (|[M_i]_\equivalence|/|\mathcal{M}|) + \sum\limits_{i \in I} p_\equivalence (M_i) \log(|[M_i]_\equivalence|) \\
        = &- \sum\limits_{i \in I} p_\equivalence (M_i) (\log (|[M_i]_\equivalence|) - \log(|\mathcal{M}|)) + \sum\limits_{i \in I} p_\equivalence (M_i) \log(|[M_i]_\equivalence|) \\
        = &- \sum\limits_{i \in I} p_\equivalence (M_i) \log (|[M_i]_\equivalence|) + \sum\limits_{i \in I}p_\equivalence (M_i)\log(|\mathcal{M}|) + \sum\limits_{i \in I} p_\equivalence (M_i) \log(|[M_i]_\equivalence|) \\
        = &\log(|\mathcal{M}|)\sum\limits_{i \in I}p_\equivalence (M_i) \\
        = &\log(|\mathcal{M}|)\qedhere
    \end{align*}

\subsection{Proof of Proposition \ref{prop:largest_equivalence_class}}

The following standard calculation shows that if $n$ is large enough, then the probability that a random $\tau$-model of size $n$ does not realize all the $1$-types is less than $1/2$.
\begin{equation*}
\begin{split}
    &\Pr[\exists \pi : \MM \text{ does not realize } \pi]  \leq \sum_\pi \Pr[\MM \text{ does not realize } \pi] \\
         & = \sum_\pi \prod_{a\in W} \Pr[a \text{ does not realize } \pi]
          = \sum_\pi \prod_{a\in W} (1 - \Pr[a \text{ does realize } \pi]) \\
         & = \sum_\pi \prod_{a\in W} (1 - 2^{-k}) 
          = n(1 - 2^{-k})^n \to 0, \text{ as } n \to \infty
    \end{split}
    \end{equation*}
In the inequality we used union bound while in the first equality we used the fact that the events ``$a$ does not realize $\pi$'', for $a\in W$, are independent.

\subsection{Proof of Lemma \ref{lem:umldwins} continued}

\textbf{\lormove}: We show that for any \lormove\ S makes, D can choose one of the following positions $P_1, P_2$ that satisfies both conditions (a) and (b). 
    
    Let $\AA_1, \AA_2$ and $r_1, r_2$ be the choices of S. We assume $\AA_1, \AA_2 \neq \emptyset$. Let $\pi_i$ be a type. If $\pi_i$ is of kind 1, then $\pi_i$ is still of kind 1 in both following positions and $h_i(P) = 2k = h_i(P_1) = h_i(P_2)$, since $\BB$ remains unchanged in both positions. If $\pi_i$ is of kind 2, then there are propositionally equivalent $(\MM_0, j) \in \AA$ and $(\MM_i, l) \in \BB$. We have $(\MM_0, j) \in \AA_1$ or $(\MM_0, j) \in \AA_2$ so $\pi_i$ is still a type of kind 2 in one of the following positions and $h_i(P_1) = 2k$ or $h_i(P_2) = 2k$. Similarly if $\pi_i$ is of kind 3, then $(\MM_0, i) \in \AA_1$ or $(\MM_0, i) \in \AA_2$ so $\pi_i$ remains a type of kind 3 in one of the following positions and $h_i(P_1) = h_i(P)$ or $h_i(P_2) = h_i(P)$. 
    Furthermore, each type with positive hardness in $P$ still has positive hardness in at least one of $P_1$ or $P_2$ so $\poshard(P_1) + \poshard(P_2) \geq \poshard(P)$. Thus
    \begin{align*}
    h(P_1) + h(P_2) &= \sum\limits_{1\leq i\leq n}h_i(P_1) +  \sum\limits_{1\leq i\leq n}h_i(P_2) + \poshard(P_1) + \poshard(P_2) - 2 \\
    &\geq \sum\limits_{1\leq i\leq n}h_i(P) + \poshard(P) - 1 - 1 = h(P) - 1.
    \end{align*}
    Now $r_1 + r_2 = r - 1 < h(P) - 1 \leq h(P_1) + h(P_2)$ so we have $r_1 < h(P_1)$ or $r_2 < h(P_2)$.
    In addition, since all types are of the same kind as in position $P$, condition (b) still holds.

\subsection{Proof of Lemma \ref{gmlulemma} continued}

\textbf{\bbomove}: Let $d \in \Nset$ be the number chosen by S. For each $(\MM, w) \in \AA$, S chooses $n-d+1$ different points from the model $\MM$. Let $A$ be the set of all points chosen this way. For each $(\MM_{i \rightarrow j}, w) \in \BB$, let $B_{i \rightarrow j} $ be the set of $d$ points chosen by S. Let $\mathrm{tp}_{\MM}(X)$ be the set of types realized by  the set $X$ of points in the model $\MM$. We consider the following two cases:
        \begin{enumerate}
            \item The model $\MM$ has at least $n-d+2$ points with types from $\mathrm{tp}_{\MM}(A)$. Let $e = (i,j) \in E$. The model $\MM_{i \rightarrow j}$ only differs from $\MM$ by the type of one point so $\MM_{i \rightarrow j}$ has at least $n-d+1$ points that satisfy types from $\mathrm{tp}_{\MM}(A)$. Since $|B_{i \rightarrow j}| = d$, there is at least one point in $B_{i \rightarrow j}$ with a type from $\mathrm{tp}_{\MM}(A)$. Thus there are propositionally equivalent $(\MM, w') \in \AA'$ and $(\MM_{i \rightarrow j}, v') \in \BB'$. Thus the edge $e = (i,j)$ is still present in the graph $\G(\AA', \BB')$ of the following position. This applies for every $e \in E$ so $R(P') = R(P)$.

            
            %
            
             \item The model $\MM$ has exactly $n-d+1$ points with types from $\mathrm{tp}_{\MM}(A)$. Now $A$ is the set of those $n-d+1$ points. We first consider edges $e = (i,j) \in E$ with $\pi_i \notin \mathrm{tp}_{\MM}(A)$ or $\pi_j \in \mathrm{tp}_{\MM}(A)$. For any edge of this kind, the model $\MM_{i \rightarrow j}$ has at least $n-d+1$ points with types from $\mathrm{tp}_{\MM}(A)$ so at least one of the $d$ points in $B_{i \rightarrow j}$ has a type from $\mathrm{tp}_{\MM}(A)$. As in case 1, this means that all these edges are still present in the graph $\G(\AA', \BB')$.

            Let us then consider the rest of the edges $e = (i,j) \in E$ with $\pi_i \in \mathrm{tp}_{\MM}(A)$ and $\pi_j \notin \mathrm{tp}_{\MM}(A)$. For an edge of this kind, the model $\MM_{i \rightarrow j}$ has only $n-d$ points with types from $\mathrm{tp}_{\MM}(A)$. Thus if S chooses the $d$ points of $B_{i \rightarrow j}$ to be exactly the points with types not in $\mathrm{tp}_{\MM}(A)$, then $\MM_{i \rightarrow j}$ has no propositionally equivalent counterpart on the other side and the edge $e$ is not present in the graph $\G(\AA', \BB')$.

            We then consider the condition of the claim in the position $P' = (r-d, \BB', \AA')$. We saw above that S can only eliminate an edge $e = (i,j)$ if $\pi_i \in \mathrm{tp}_{\MM}(A)$, $\pi_j \notin \mathrm{tp}_{\MM}(A)$ and $\mathrm{tp}_{\MM_{i \rightarrow j}}(B_{i \rightarrow j}) \subseteq \mathrm{tp}_{\MM}(W) \setminus \mathrm{tp}_{\MM}(A)$. Thus we denote $\mathrm{tp}(B) := \mathrm{tp}_{\MM}(W) \setminus \mathrm{tp}_{\MM}(A)$. All edges of this kind can be covered via the cover $C_B = \{j^- \mid \pi_j \in \mathrm{tp}(B)\}$. The cost of this cover is the total number of points with types from $\mathrm{tp}(B)$ in the model $\MM$. By the definition of $\mathrm{tp}(B)$ the cost is $r(C_B) = n - |A| = n-(n-d+1) = d-1$. Let $C'$ be a cover of $\G(\AA', \BB')$ with minimal cost so $R(P') = r(C')$. Now $C' \cup C_B$ is a cover of $\G(\AA, \BB)$ with cost $R(P') + d-1$. Thus $r < R(P) \leq R(P') + d-1$ so $r-d < R(P')$ and the condition of the claim is maintained.  
        \end{enumerate}



\subsection{Proof of Proposition \ref{prop:boltzmann_entropy_versus_description_complexity}}\label{appendix:boltzmann_entropy_vs_description_complexity}

In this section we use $\equivalence$ to denote $\equivalence_{\mathrm{FO}[\tau]}$. Our goal is to show that the expected Boltzmann entropy of $\equivalence$ grows asymptotically slower than its expected description complexity.

We start by bounding the expected Boltzmann entropy from above. For every equivalence class $M$ of $\equivalence$ we have by Proposition \ref{prop:stirling_approximation} that \[\log(|M|) \leq \log(n!) = n\log(n) - n\log(e) + \Theta(\log(n)),\]
which in turn implies that $\langle H_B \rangle \leq n\log(n) - n\log(e) + \Theta(\log(n))$. 

Next we will derive a lower bound on the expected description complexity of $\equivalence$. Let $c$ be a constant such that with high probability $C_{\mathrm{FO}[\tau]}(M) \geq c\big(\frac{n^m}{\log(n)}\big)$. (Theorem \ref{thm:average_case_lower_bound_fo} guarantees that such a constant exists.) In \cite{Fagin77} it was proved that with high probability a random $\tau$-model is rigid, i.e., it has no non-trivial automorphism. Since the isomorphism class of a rigid $\tau$-model is of size $n!$, we have that with high probability a random member of $\equivalence$ has size $n!$. Using a union bound argument we have that
\begin{equation}\label{eq:probability_result}
    \lim_{n\to\infty} \Pr\bigg[C_{\mathrm{FO}[\tau]}(M) \geq c\bigg(\frac{n^m}{\log(n)}\bigg) \text{ and } |M| = n!\bigg] = 1
\end{equation}
In particular, the above probability is at least, say, $1/2$ when $n$ is large enough. In other words, for $n$ large enough, at least half of the isomorphism classes (of models of size $n$) have size $n!$ and their description complexity is at least $c\big(\frac{n^m}{\log(n)}\big)$.

Now we can bound the expected description complexity from below. First, we have that
\begin{align*}
    &\sum_M p_\equivalence(M) C_{\mathrm{FO}[\tau]}(M) \geq \sum_{C_{\mathrm{FO}[\tau]}(M) \geq c\big(\frac{n^m}{\log(n)}\big)} p_\equivalence(M) C_{\mathrm{FO}[\tau]}(M) \\
    &\geq c\bigg(\frac{n^m}{\log(n)}\bigg) \sum_{C_{\mathrm{FO}[\tau]}(M) \geq c\big(\frac{n^m}{\log(n)}\big)} p_\equivalence(M) 
    = c\bigg(\frac{n^m}{\log(n)}\bigg) \frac{1}{2^{p(n)}} \sum_{C_{\mathrm{FO}[\tau]}(M) \geq c\big(\frac{n^m}{\log(n)}\big)} |M|.
\end{align*}
We want a constant lower bound on the expression
\[\frac{1}{2^{p(n)}} \sum_{C_{\mathrm{FO}[\tau]}(M) \geq c\big(\frac{n^m}{\log(n)}\big)} |M|\]
which expresses the probability that the isomorphism class of random model of size $n$ has description complexity at least $c\big(\frac{n^m}{\log(n)}\big)$. Using Equation (\ref{eq:probability_result}), we have for $n$ large enough the following estimates:
\begin{align*}
    &\frac{1}{2^{p(n)}} \sum_{C_{\mathrm{FO}[\tau]}(M) \geq c\big(\frac{n^m}{\log(n)}\big)} |M| \geq \frac{1}{2^{p(n)}} \sum_{\substack{C_{\mathrm{FO}[\tau]}(M) \geq c\big(\frac{n^m}{\log(n)}\big) \\ |M| = n!}} |M|
    = \frac{n!}{2^{p(n)}} \sum_{\substack{C_{\mathrm{FO}[\tau]}(M) \geq c\big(\frac{n^m}{\log(n)}\big) \\ |M| = n!}} 1 \\
    &\geq \frac{n!}{2^{p(n)}} (1/2) \frac{2^{p(n)}}{n!} 
    = 1/2.
\end{align*}
Thus for $n$ large enough we have that $\langle C_{\mathrm{FO}[\tau]} \rangle \geq (1/2) c\big(\frac{n^m}{\log(n)}\big)$, which certainly grows faster than $n\log(n) - n\log(e) + \Theta(\log(n)) \geq \langle H_B \rangle$.

\end{document}